%
%

\documentclass[11pt,twoside]{amsart}
\usepackage{latexsym,amssymb,amsmath}
\usepackage[all]{xy}

\textwidth=16.00cm
\textheight=22.00cm
\topmargin=0.00cm
\oddsidemargin=0.00cm
\evensidemargin=0.00cm
\headheight=0cm
\headsep=1cm
\headsep=0.5cm
\numberwithin{equation}{section}
\hyphenation{semi-stable}
\setlength{\parskip}{3pt}

\newtheorem{theorem}{Theorem}[section]
\newtheorem{lemma}[theorem]{Lemma}
\newtheorem{proposition}[theorem]{Proposition}
\newtheorem{corollary}[theorem]{Corollary}

\theoremstyle{definition}
\newtheorem{definition}[theorem]{Definition}
\newtheorem{procedure}[theorem]{Procedure}

\newtheorem{example}[theorem]{Example}

\begin{document}

\title[Weighted Ehrhart functions]{Weighted Ehrhart functions}

\thanks{The authors were supported by SNII, M\'exico.}

\author[E. Reyes]{Enrique Reyes}
\address{
Departamento de Matem\'aticas\\
Cinvestav, Av. IPN 2508, 07360, CDMX, M\'exico.
}
\email{ereyes@math.cinvestav.mx}

\author[C. E. Valencia]{Carlos E. Valencia}
\address{
Departamento de Matem\'aticas\\
Cinvestav, Av. IPN 2508, 07360, CDMX, M\'exico.
}
\email{cvalencia@math.cinvestav.edu.mx}
\author[R. H. Villarreal]{Rafael H. Villarreal}
\address{
Departamento de Matem\'aticas\\
Cinvestav, Av. IPN 2508, 07360, CDMX, M\'exico.
}
\email{rvillarreal@cinvestav.mx}

\keywords{Lattice polytopes, weights on lattice points, Eulerian
numbers, Ehrhart functions, Ehrhart rings, polynomial interpolation}  
\subjclass[2020]{Primary 52B20; Secondary 13F20, 05A15, 90C10.}

\dedicatory{Festschrift on the occasion of the 80th anniversary of the Mexican Mathematical Society}  

\begin{abstract} 
We give an algorithm for computing weighted Ehrhart functions of
lattice polytopes with polynomial weights on its lattice points using Lagrange interpolation.  
We show how to compute generating functions of polynomials using
those of unit cubes and Eulerian numbers, and apply integer
programming to study the algebraic properties of the Ehrhart ring of
the $d$-th unit cube.    
We then present some applications to weighted Ehrhart functions and
enumeration problems.  
\end{abstract}

\maketitle

\section{Introduction}

Let $S=K[t_1,\ldots,t_s] $ be a polynomial ring over a field $K$ and
let $\mathbb{N}=\{0,1,\ldots\}$ be the non-negative integers.  
A function $E\colon\mathbb{N}\rightarrow K$ is called a
\textit{polynomial} in $n$ if there is a polynomial $g(x)\in K[x]$
such that $E(n)=g(n)$ for all $n\in\mathbb{N}$.   
When $E(n)=g(n)$ for all $n\gg 0$, we say that $E$ is a \textit{polynomial function}.

Let $\mathcal{P}={\rm conv}(v_1,\ldots,v_m)$ be a lattice polytope in
$\mathbb{R}^s$ of dimension $d$ whose vertices are in $\mathbb{N}^s$.  
The \textit{Ehrhart function}
$E_\mathcal{P}\colon\mathbb{N}\rightarrow\mathbb{N}$, $n\mapsto
|n\mathcal{P}\cap\mathbb{Z}^s|$, of $\mathcal{P}$ is a polynomial in
$n$ of degree $d$ whose leading coefficient is the relative volume
${\rm vol}(\mathcal{P})$ of $\mathcal{P}$ (see, for instance,
\cite{BeckRobins}), and the generating function of $F_\mathcal{P}$ of
$E_\mathcal{P}$ is a rational function of the form      
$$
F_\mathcal{P}(x):=\sum_{n=0}^\infty E_\mathcal{P}(n)x^n=\frac{h(x)}{(1-x)^{d+1}}.
$$
\quad By the famous positivity theorem of Stanley in Ehrhart theory
\cite[Theorem~2.1]{Stanley-nonneg-h-vector}, $h(x)$ is a polynomial
with nonnegative integer coefficients of degree at most $d$.   
On the ring theory side, the \textit{Ehrhart ring} of $\mathcal{P}$,
denoted $A(\mathcal{P})$, is the monomial subring of $S[z]$ given by
\begin{equation*}
A(\mathcal{P}):=K[\{t^az^n\mid a\in n\mathcal{P}\cap\mathbb{Z}^s\}],
\end{equation*}
where $z$ is a \emph{grading} variable. 
The Ehrhart ring $A(\mathcal{P})$ has a natural $\mathbb{N}$-grading given by  
$$
A(\mathcal{P})=\bigoplus_{n=0}^\infty A(\mathcal{P})_n,
$$
where $t^az^n\in A(\mathcal{P})_n$ if and only if $a\in
n\mathcal{P}\cap\mathbb{Z}^s$.  
The Ehrhart ring $A(\mathcal{P})$ is normal and it is a finitely
generated $K$-algebra, that is,
$A(\mathcal{P})=K[\{t^{c_i}z^{d_i}\mid i=1,\ldots,r\}]$.  
See \cite[Theorem~9.3.6]{monalg-rev} for a summary of properties of Ehrhart rings.  
  
Ehrhart's theory has been the subject of considerable attention due
to its applications in combinatorics and commutative algebra. 
Brion and Vergne \cite{Brion-Vergne} presented in 1997 a
generalization of Ehrhart's theorem where the lattice points are counted with
a ``weight'' given by evaluating a polynomial $w$ with coefficients in a field
$K$ of characteristic zero, i.e.,    
$$
E_\mathcal{P}^w(n):=\sum_{\alpha \in n\mathcal{P} \cap \mathbb{Z}^s} w(\alpha).
$$  
\quad The function $E_\mathcal{P}^w$ is the \textit{weighted Ehrhart
function} of $\mathcal{P}$  and its generating function  
$$
F_\mathcal{P}^w(x):=\sum_{n=0}^\infty E_\mathcal{P}^w(n)x^n
$$
is the \textit{weighted Ehrhart series} of $\mathcal{P}$. 
Weighted Ehrhart theory has been developed in several papers,
see~\cite{jdl3,baldoni-etal-1,baldoni-etal,Brion-Vergne,bruns-ichim-soger,Bruns-Soger}
and references therein.   
We thank Jes\'us De Loera for introducing us to this theory while he
was on sabbatical at Cinvestav in the fall of 2022. The recent
paper~\cite{ehrhartw} studies Ehrhart functions and series of
weighted lattice points.    

In Section~\ref{section-weighted-ehrhart}, we recall some facts from
weighted Ehrhart theory and show some applications to weighted
Ehrhart functions.   

 
The Lagrange interpolating polynomial is the unique univariate
polynomial $f(x)$ of degree at most $e$ that passes through $e+1$
distinct points in $\mathbb{R}^2$.    
In Appendix~\ref{procedures-wehrhart}, we give an algorithm
implemented in \textit{Macaulay}$2$ \cite{mac2}---using its interface
to \textit{Normaliz} \cite{normaliz2}---to compute the weighted
Ehrhart polynomial and the weighted Ehrhart series of a lattice
polytope $\mathcal{P}$ relative to a polynomial $w$.     
This algorithm is based on the fact that $E_\mathcal{P}^w$ is a
polynomial in $n$ of degree at most $\dim(\mathcal{P})+\deg(w)$
(Theorem~\ref{nonneg-poly}(a)), which allows us to use Lagrange
interpolation \cite{interpolation} to compute $E_\mathcal{P}^w$.    
Then, using the fact that the generating function of a polynomial in
$n$ can be computed using the unit cubes $C_d:=[0,1]^d$, $d\geq 0$
(Lemma~\ref{oct12-22}), we can compute the weighted Ehrhart series
$F_\mathcal{P}^w$.    
The algorithm is implemented and explained in
Procedure~\ref{procedure0}.  
In Section~\ref{examples-section}, we provide several examples that
can be computed using this procedure. 

One can also use \textit{Normaliz} \cite{normaliz2,Bruns-Soger}
directly (Example~\ref{jun4-25-1}) or finite differences methods to
compute $E_\mathcal{P}^w$ and $F_\mathcal{P}^w$ \cite{Sauer,Sauer-Xu}.   
If $w$ is linear, one can give a procedure for \textit{Normaliz}
\cite{normaliz2} based on Proposition~\ref{linear-weighted-ehrhart} to compute $E_\mathcal{P}^w$ and $F_\mathcal{P}^w$.   

In Section~\ref{ehrhart-cube}, we explain how to compute generating functions of polynomials using those of unit cubes and
Eulerian numbers and, motivated by this, we use integer programming to study the algebraic properties of the Ehrhart ring of the $d$-th unit cube.

We consider the following enumeration problem. 
Let $w_i\colon\mathbb{R}^s\rightarrow\mathbb{R}$, $i=1,\ldots,p$, be linear functions such that $w_i(e_j)\in\mathbb{N}$ for all $i,j$. 
Is the function $H\colon\mathbb{N}\rightarrow\mathbb{N}$,
$$
H(n):=|w(n\mathcal{P}\cap\mathbb{Z}^s)|,\ w=(w_1,\ldots,w_p),
$$
a polynomial function? 
In Section~\ref{enumeration-problem}, we give a positive answer to this problem and give a method to compute $H$ by showing that the following \textit{weighted Ehrhart ring} $A^w(\mathcal{P})$ is a finitely generated graded $K$-algebra, generated by monomials, whose Hilbert function is $H$:
$$
A^w(\mathcal{P}):=K[\{y^{w(a)}z^n\mid a\in n\mathcal{P}\cap\mathbb{Z}^s\}]\subset K[y_1,\ldots,y_p,z],
$$
where $y^{w(a)}:=y^{(w_1(a),\ldots,w_p(a))}=y_1^{w_1(a)}\cdots y_p^{w_p(a)}$ for $a\in\mathbb{N}^s$ and $K[y_1,\ldots,y_p,z]$ is a polynomial ring over the field $K$, see Theorem~\ref{jdl2}. 
Several types of weighted Ehrhart rings and weighted Ehrhart functions are studied by De Loera et al. in \cite{ehrhartw}. 

For unexplained terminology and additional information, we refer to \cite{BeckRobins,BG-book} for Ehrhart theory and \cite{BHer,Sta1,Sta5,Sta2} for commutative algebra and enumerative combinatorics.

\section{Weighted Ehrhart theory}\label{section-weighted-ehrhart}
 
\begin{definition}\rm 
We say that a lattice polytope $\mathcal{P}={\rm conv}(v_1,\ldots,v_m)$ is \textit{non-degenerate} if for each $1\leq i\leq s$, there is $v_j$ such that the $i$-th entry of $v_j$ is non-zero. 
\end{definition}


\begin{theorem}\label{nonneg-poly}
If $w$ is a non-zero polynomial of $S=K[t_1,\ldots,t_s]$ of degree
$p$, $\mathcal{P}$ is a lattice polytope of dimension $d$, and
$\mathbb{R}\subset K$, then the following holds\/: 
\begin{enumerate}
\item[\rm(a)] {\rm\cite{ehrhartw}} $E_\mathcal{P}^{w}$ is a
polynomial in $n$ of degree at most $d+p$ and $F_\mathcal{P}^{w}(x)$
is a rational function.    
\item[\rm(b)] {\rm\cite{ehrhartw}} If $\mathcal{P}$ is non-degenerate
and $w$ is a monomial, then $\deg(E_\mathcal{P}^{w})=d+p$.    
\item[\rm(c)] {\rm\cite{ehrhartw}} $E_\mathcal{P}^{w}$ is a
$K$-linear combination of Ehrhart polynomials.   
\item[\rm(d)]{\rm\cite[Proposition~4.1]{Brion-Vergne}} If the
interior $\mathcal{P}^{\rm o}$ of $\mathcal{P}$ is nonempty,
$K=\mathbb{R}$, and $w$ is homogeneous with $w\geq 0$ on $\mathcal{P}$, then $E_\mathcal{P}^{w}$ is a polynomial of degree $s+p$.        
\end{enumerate}
\end{theorem}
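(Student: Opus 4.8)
The plan is to handle parts (a) and (c) together through a single lattice-polytope construction, to deduce (b) from that construction, and to treat (d) by a direct computation of the leading coefficient. Since the assignment $w\mapsto E_\mathcal{P}^w$ is $K$-linear, I would first reduce everything to a convenient basis of the space of polynomials of degree $\le p$. Instead of the monomial basis I would use the shifted monomials $\bar w_a:=\prod_{i=1}^s(t_i+1)^{a_i}$ for $a\in\mathbb{N}^s$; these also form a $K$-basis, since $\bar w_a=t^a+(\text{lower order terms})$ makes the change of basis from the monomials unitriangular in the degree filtration. The point of this choice is that each $E_\mathcal{P}^{\bar w_a}$ is literally an ordinary Ehrhart polynomial.

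Concretely, for $a=(a_1,\dots,a_s)$ I would introduce the region
\[
\mathcal{Q}_a:=\{(\gamma,\delta)\in\mathbb{R}^s\times\mathbb{R}^{|a|}\mid \gamma\in\mathcal{P},\ 0\le\delta_{i,k}\le\gamma_i\ (1\le i\le s,\ 1\le k\le a_i)\},
\]
whose extra coordinates $\delta_{i,k}$ are indexed by $1\le k\le a_i$. Because $\mathcal{P}\subset\mathbb{R}^s_{\ge0}$ (the vertices lie in $\mathbb{N}^s$) and each $\gamma_i$ is an integral linear form, $\mathcal{Q}_a$ is a lattice polytope whose vertices are the points $(v,\delta)$ with $v$ a vertex of $\mathcal{P}$ and $\delta_{i,k}\in\{0,v_i\}$. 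A direct count shows that $n\mathcal{Q}_a\cap\mathbb{Z}^{s+|a|}$ consists of the pairs $(\alpha,\delta)$ with $\alpha\in n\mathcal{P}\cap\mathbb{Z}^s$ and $0\le\delta_{i,k}\le\alpha_i$, so that
\[
E_{\mathcal{Q}_a}(n)=\sum_{\alpha\in n\mathcal{P}\cap\mathbb{Z}^s}\prod_{i=1}^s(\alpha_i+1)^{a_i}=E_\mathcal{P}^{\bar w_a}(n).
\]
Writing $w=\sum_a c_a\bar w_a$ then gives $E_\mathcal{P}^w=\sum_a c_a E_{\mathcal{Q}_a}$, which is simultaneously a $K$-linear combination of Ehrhart polynomials, proving (c), and a polynomial in $n$ of degree at most $d+p$, since $\dim\mathcal{Q}_a\le d+|a|\le d+p$; this gives the polynomiality and degree bound of (a), and the rationality of $F_\mathcal{P}^w$ then follows from the generating-function formula for polynomials (Lemma~\ref{oct12-22}). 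For (b) I would use $\bar w_a=t^a+(\text{lower order terms})$ to write $E_\mathcal{P}^{t^a}=E_{\mathcal{Q}_a}-E_\mathcal{P}^{\bar w_a-t^a}$, where the subtracted term has degree $\le d+|a|-1$ by (a); it then suffices to check $\dim\mathcal{Q}_a=d+|a|$, and non-degeneracy is exactly what delivers this, as the barycenter of the vertices lies in the relative interior of $\mathcal{P}$ with every coordinate strictly positive, so the fiber of $\mathcal{Q}_a$ over it is a full $|a|$-dimensional box.

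Finally, for (d), where $\mathcal{P}^{\rm o}\neq\emptyset$ forces $d=s$, I would compute the leading coefficient as a Riemann sum, using the homogeneity relation $w(\alpha)=n^p\,w(\alpha/n)$:
\[
\lim_{n\to\infty}\frac{E_\mathcal{P}^w(n)}{n^{s+p}}=\lim_{n\to\infty}\frac{1}{n^s}\sum_{\alpha\in n\mathcal{P}\cap\mathbb{Z}^s}w(\alpha/n)=\int_\mathcal{P}w(y)\,dy.
\]
Since $w$ is a nonzero polynomial and $\mathcal{P}$ has nonempty interior, $w$ cannot vanish identically on $\mathcal{P}$; as $w\ge0$ there and its zero locus has measure zero, this integral is strictly positive, so the coefficient of $n^{s+p}$ in $E_\mathcal{P}^w$ is nonzero and $\deg E_\mathcal{P}^w=s+p$. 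I expect the main obstacle to lie not in the constructions but in the two ``exactness'' points: verifying $\dim\mathcal{Q}_a=d+|a|$ under non-degeneracy in (b), and justifying the Riemann-sum limit with the correct relative-volume normalization together with the strict positivity of $\int_\mathcal{P}w$ in (d).
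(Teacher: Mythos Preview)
The paper does not prove this theorem at all: parts (a)--(c) are quoted from \cite{ehrhartw} and part (d) from \cite{Brion-Vergne}, with no argument reproduced in the text. There is therefore no in-paper proof to compare against.

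Your argument, taken on its own, is sound. The passage to the shifted basis $\bar w_a=\prod_i(t_i+1)^{a_i}$ so that $E_\mathcal{P}^{\bar w_a}$ becomes the ordinary Ehrhart polynomial of the fibered lattice polytope $\mathcal{Q}_a$ handles (a) and (c) in one stroke; the claim that the vertices of $\mathcal{Q}_a$ are the points $(v,\delta)$ with $v$ a vertex of $\mathcal{P}$ and $\delta_{i,k}\in\{0,v_i\}$ follows from a routine linear-programming argument (for a generic linear functional, optimize first in $\delta$ over each fiber, then in $\gamma$ over $\mathcal{P}$), and this uses the standing hypothesis $\mathcal{P}\subset\mathbb{R}^s_{\ge0}$ from the introduction. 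For (b), non-degeneracy indeed forces the barycenter of the vertex set to have all coordinates positive, so the fiber there is a full $|a|$-box and $\dim\mathcal{Q}_a=d+|a|$; since $E_\mathcal{P}^{\bar w_a-t^a}$ has degree at most $d+|a|-1$ by (a), no cancellation occurs. For (d), the Riemann-sum identification of the $n^{s+p}$ coefficient with $\int_\mathcal{P}w$ is standard once (a) supplies polynomiality, and strict positivity follows because a nonzero polynomial cannot vanish on a set of positive Lebesgue measure. The two ``exactness'' points you flag as possible obstacles are in fact straightforward once formulated as you have.
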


In Theorem~\ref{nonneg-poly}(d) the leading coefficient of $E_\mathcal{P}^w$ is equal to $\int_\mathcal{P}w$. 
This fact appears in \cite[p.~437]{baldoni-etal} and \cite[Proposition~5]{Bruns-Soger}. 
Integrals of the type $\int_\mathcal{P}w$ with $w$ a polynomial and $\mathcal{P}$ a rational polytope were studied in \cite{baldoni-etal-1,barvinok,barvinok1,Bruns-Soger,lasserre}.
Algorithms and the implementations to compute this integral were developed independently in \textit{LattE integrale} \cite{latte-integrale} and \textit{Normaliz} \cite{normaliz2} (see Example~\ref{jun4-25-1}).  

\begin{corollary}\label{jan8-23-1} 
Let $G$ be a graph with vertex set $V(G)=\{t_1,\ldots,t_s\}$ and without isolated vertices, let $\mathcal{P}$ be the convex hull of all $e_i+e_j$ in $\mathbb{R}^s$ such that $\{t_i,t_j\}$ is an edge of $G$, and let $c_0$ be the number of bipartite connected components of $G$. 
If $w$ is a monomial in $S$, then 
$$
\deg(E_\mathcal{P}^w)=s-c_0-1+\deg(w).
$$
\end{corollary}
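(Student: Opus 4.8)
The plan is to deduce this directly from Theorem~\ref{nonneg-poly}(b). That result asserts that whenever $\mathcal{P}$ is non-degenerate and $w$ is a monomial, one has $\deg(E_\mathcal{P}^w)=\dim(\mathcal{P})+\deg(w)$. Thus the whole statement reduces to two geometric facts about the edge polytope $\mathcal{P}$: first, that $\mathcal{P}$ is non-degenerate, and second, that $\dim(\mathcal{P})=s-c_0-1$. The degree formula for $E_\mathcal{P}^w$ then follows by substitution.

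Non-degeneracy is immediate. Since $G$ has no isolated vertices, every vertex $t_i$ lies on some edge $\{t_i,t_j\}$, and then the vertex $e_i+e_j$ of $\mathcal{P}$ has $i$-th entry equal to $1$. Hence for each $i$ there is a generator of $\mathcal{P}$ with nonzero $i$-th coordinate, which is exactly the non-degeneracy condition.

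For the dimension I would argue as follows. Every generator $e_i+e_j$ has coordinate sum $2$, so the affine hull of $\mathcal{P}$ lies in the hyperplane $\{\sum_k x_k=2\}$, which does not pass through the origin. Consequently $\dim(\mathcal{P})$ equals the dimension of the real linear span $W$ of the vectors $\{e_i+e_j\mid \{t_i,t_j\}\in E(G)\}$ minus one, and it remains to show $\dim_\mathbb{R}W=s-c_0$. Since edges of distinct connected components involve disjoint sets of coordinates, $W$ is the direct sum of the spans $W_1,\ldots,W_k$ coming from the components $G_1,\ldots,G_k$, and I would compute each $\dim W_\ell$ separately. Writing $n_\ell$ for the number of vertices of $G_\ell$, I expect $\dim W_\ell=n_\ell$ when $G_\ell$ is non-bipartite and $\dim W_\ell=n_\ell-1$ when $G_\ell$ is bipartite; summing these over the $k$ components gives $\sum_\ell n_\ell-c_0=s-c_0$, as desired.

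The heart of the argument---and the step I expect to be the most delicate---is therefore the rank computation for a single connected component $G_\ell$. I would fix a spanning tree $T$ of $G_\ell$ and first show that the $n_\ell-1$ edge vectors of $T$ are linearly independent: a tree has a leaf whose incident edge contributes the unique vector supported at that leaf, so a leaf-removal induction applies. This already gives $\dim W_\ell\geq n_\ell-1$. If $G_\ell$ is bipartite with parts $A_\ell,B_\ell$, every edge vector satisfies $\sum_{t_i\in A_\ell}x_i=\sum_{t_i\in B_\ell}x_i$, forcing $\dim W_\ell\leq n_\ell-1$ and hence equality. If $G_\ell$ is non-bipartite, it contains an edge $\{t_a,t_{a'}\}$ joining two vertices in the same part of the bipartition of $T$; the corresponding vector $e_a+e_{a'}$ violates the linear relation satisfied by the tree vectors, so it is independent from them and $\dim W_\ell=n_\ell$. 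Substituting $\dim(\mathcal{P})=s-c_0-1$ into Theorem~\ref{nonneg-poly}(b) completes the proof.
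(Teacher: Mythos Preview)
Your proof is correct and follows exactly the same route as the paper: verify non-degeneracy from the absence of isolated vertices, invoke Theorem~\ref{nonneg-poly}(b), and plug in $\dim(\mathcal{P})=s-c_0-1$. The only difference is that the paper simply cites \cite[Proposition~10.4.1]{monalg-rev} for the dimension formula, whereas you supply a self-contained spanning-tree argument; your rank computation for each component is correct as written.
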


\begin{proof} As $G$ has no isolated vertices, $\mathcal{P}$ is non-degenerate, and the result follows readily from Corollary~\ref{nonneg-poly}(b) and from the fact that $\dim(\mathcal{P})=s-c_0-1$ \cite[Proposition~10.4.1]{monalg-rev}.
\end{proof}

\begin{proposition}{\rm\cite{ehrhartw}}\label{linear-weighted-ehrhart}
Let $\mathcal{P}={\rm conv}(v_1,\ldots,v_m)$ be a lattice polytope
whose vertices are in $\mathbb{N}^s$ and let
$w\colon\mathbb{R}^s\rightarrow\mathbb{R}$ be a non-zero linear
function such that $w(e_i)\in\mathbb{N}$ for $i=1,\ldots,s$.   
Then,      
$$
E_\mathcal{P}^{w}(n):=\sum_{a\in
n\mathcal{P}\cap\mathbb{Z}^s}w(a)=E_{\mathcal{P}_{w}}(n)-E_\mathcal{P}(n)\mbox{
for all }n\geq 0,    
$$
where ${\mathcal{P}_w}={\rm conv}((v_1,0),\ldots,(v_m,0),(v_1,w(v_1)),\ldots(v_m,w(v_m)))$.
\end{proposition}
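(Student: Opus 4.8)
The plan is to recognize $\mathcal{P}_w$ as the solid region lying between the base $\mathcal{P}\times\{0\}$ and the graph of the linear function $w$ over $\mathcal{P}$, and then to count lattice points of $n\mathcal{P}_w$ fiber by fiber over the projection to the first $s$ coordinates. The first step is to establish the geometric identity
$$
\mathcal{P}_w=R:=\{(x,t)\in\mathbb{R}^{s+1}\mid x\in\mathcal{P},\ 0\leq t\leq w(x)\}.
$$
Because the $v_i$ lie in $\mathbb{N}^s$ and $w(e_i)\in\mathbb{N}$, one has $w\geq 0$ on $\mathcal{P}$ and $w(v_i)\in\mathbb{N}$, so $R$ is well defined and sits above the base; in particular all $2m$ generating points $(v_i,0)$ and $(v_i,w(v_i))$ belong to $\mathbb{N}^{s+1}$, making $\mathcal{P}_w$ a genuine lattice polytope. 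The inclusion $\mathcal{P}_w\subseteq R$ is immediate, since linearity of $w$ makes the constraint $t\leq w(x)$ convex, so $R$ is a convex set containing all the generators.

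For the reverse inclusion $R\subseteq\mathcal{P}_w$, I would take $(x,t)\in R$, write $x=\sum_i\mu_i v_i$ as a convex combination of the vertices of $\mathcal{P}$, and choose $\lambda\in[0,1]$ with $t=\lambda\,w(x)$ (the degenerate case $w(x)=0$ forces $t=0$ and is handled directly). Using $w(x)=\sum_i\mu_i w(v_i)$ one then writes
$$
(x,t)=\sum_i\mu_i\bigl[(1-\lambda)(v_i,0)+\lambda(v_i,w(v_i))\bigr],
$$
which exhibits $(x,t)$ as a convex combination of the generators, so $(x,t)\in\mathcal{P}_w$. This completes the identity $\mathcal{P}_w=R$.

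With the identity in hand, linearity of $w$ gives the dilation $n\mathcal{P}_w=\{(x,t)\mid x\in n\mathcal{P},\ 0\leq t\leq w(x)\}$, and I would count lattice points fiberwise. For each $a\in n\mathcal{P}\cap\mathbb{Z}^s$ the value $w(a)=\sum_j a_j\,w(e_j)$ is a non-negative integer (this is exactly where the hypothesis $w(e_i)\in\mathbb{N}$ is used a second time), so the integers $k$ with $(a,k)\in n\mathcal{P}_w\cap\mathbb{Z}^{s+1}$ are precisely $0,1,\ldots,w(a)$, contributing $w(a)+1$ points. Summing over $a$ yields
$$
E_{\mathcal{P}_w}(n)=\sum_{a\in n\mathcal{P}\cap\mathbb{Z}^s}\bigl(w(a)+1\bigr)=E_\mathcal{P}^{w}(n)+E_\mathcal{P}(n),
$$
which rearranges to the desired formula, valid for all $n\geq 0$. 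The main obstacle is the reverse inclusion $R\subseteq\mathcal{P}_w$; once the explicit convex decomposition above is written down, the rest is bookkeeping, with the only subtlety being the need to verify that the two uses of $w(e_i)\in\mathbb{N}$—one to make $\mathcal{P}_w$ a lattice polytope, one to make each fiber count the integer $w(a)+1$—are both genuinely required.
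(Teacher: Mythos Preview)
Your proof is correct. The identification $\mathcal{P}_w=\{(x,t)\mid x\in\mathcal{P},\ 0\le t\le w(x)\}$ is the right geometric picture, the convex-combination argument for the reverse inclusion is clean and complete, and the fiberwise count is sound once you observe (as you do) that $a\in n\mathcal{P}\cap\mathbb{Z}^s$ forces $a\in\mathbb{N}^s$ and hence $w(a)\in\mathbb{N}$.

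Note, however, that the paper does not supply its own proof of this proposition: it is stated with a citation to \cite{ehrhartw} and no argument is given in the text. So there is nothing to compare your approach against here; you have filled in a proof that the paper outsources. Your argument is the natural one and almost certainly coincides with what the cited reference does.

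One small wording quibble: the first use of $w(e_i)\in\mathbb{N}$ you flag---making $\mathcal{P}_w$ a lattice polytope---only requires $w(v_i)\in\mathbb{N}$, which is weaker in general (though equivalent here given the hypotheses). The genuinely indispensable use is the second one, ensuring $w(a)\in\mathbb{Z}$ for every lattice point $a$ of the dilates, so that the fiber count is exactly $w(a)+1$.
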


\begin{corollary}\label{affine-weighted-ehrhart} 
Let $\mathcal{P}={\rm conv}(v_1,\ldots,v_m)$ be a lattice polytope whose vertices are in $\mathbb{N}^s$.
If $\omega\colon\mathbb{R}^s\rightarrow\mathbb{R}$ is an affine
function of the form $w(x)=Cx+b$, $C(c_1,\ldots,c_s)\in\mathbb{R}^s$,
$b\in\mathbb{R}$, such that $Ce_i\in\mathbb{N}$ for $i=1,\ldots,s$,
then        
$$
E_\mathcal{P}^w(n):=\sum_{a\in
n\mathcal{P}\cap\mathbb{Z}^s}w(a)=E_{\mathcal{Q}_1}(n)+(b-1)E_\mathcal{P}(n)\mbox{
for all }n\geq 0,  
$$
where $\mathcal{Q}_1={\rm conv}((v_1,0),\ldots,(v_m,0),(v_1,Cv_1),\ldots(v_m,Cv_m))$.
\end{corollary}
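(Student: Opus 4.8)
The plan is to reduce Corollary~\ref{affine-weighted-ehrhart} to the already-proven Proposition~\ref{linear-weighted-ehrhart} by separating the linear part of $\omega$ from its constant term. Writing $w(x) = Cx + b$ with $C = (c_1,\ldots,c_s)$ and $b \in \mathbb{R}$, I would set $\ell(x) := Cx$, which is precisely a non-zero linear function satisfying $\ell(e_i) = Ce_i \in \mathbb{N}$ for all $i$ (assuming $C \neq 0$; the degenerate case $C=0$ should be handled separately), so that Proposition~\ref{linear-weighted-ehrhart} applies directly to $\ell$.

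First I would exploit the additivity of the weighted sum over the finite set $n\mathcal{P}\cap\mathbb{Z}^s$. For each fixed $n$,
\begin{equation*}
E_\mathcal{P}^w(n) = \sum_{a\in n\mathcal{P}\cap\mathbb{Z}^s}\bigl(Ca + b\bigr)
= \sum_{a\in n\mathcal{P}\cap\mathbb{Z}^s} Ca \; + \; b\sum_{a\in n\mathcal{P}\cap\mathbb{Z}^s} 1 .
\end{equation*}
The first sum is exactly $E_\mathcal{P}^{\ell}(n)$ and the second is $b\,|n\mathcal{P}\cap\mathbb{Z}^s| = b\,E_\mathcal{P}(n)$. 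Applying Proposition~\ref{linear-weighted-ehrhart} to $\ell$ gives $E_\mathcal{P}^{\ell}(n) = E_{\mathcal{P}_\ell}(n) - E_\mathcal{P}(n)$, where $\mathcal{P}_\ell$ is the lift described there. Since $\ell(v_j) = Cv_j$, the lifted polytope $\mathcal{P}_\ell$ coincides with the polytope $\mathcal{Q}_1 = {\rm conv}((v_1,0),\ldots,(v_m,0),(v_1,Cv_1),\ldots,(v_m,Cv_m))$ named in the statement.

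Substituting back, I would obtain
\begin{equation*}
E_\mathcal{P}^w(n) = \bigl(E_{\mathcal{Q}_1}(n) - E_\mathcal{P}(n)\bigr) + b\,E_\mathcal{P}(n)
= E_{\mathcal{Q}_1}(n) + (b-1)E_\mathcal{P}(n),
\end{equation*}
which is exactly the claimed identity, valid for all $n \geq 0$. The argument is essentially a one-line linearity decomposition once Proposition~\ref{linear-weighted-ehrhart} is invoked, so I do not anticipate a serious obstacle. The one point that warrants care is the hypothesis that $\ell = C(\cdot)$ be \emph{non-zero}, which is required by Proposition~\ref{linear-weighted-ehrhart}; if $C$ is the zero vector then $w \equiv b$ is constant and the statement reduces to $E_\mathcal{P}^w(n) = b\,E_\mathcal{P}(n)$, which one should check is consistent with the formula by verifying that $\mathcal{Q}_1$ degenerates to $\mathcal{P}$ (so that $E_{\mathcal{Q}_1} = E_\mathcal{P}$) in that case. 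The other detail to confirm is merely the bookkeeping identification of $\mathcal{P}_\ell$ with $\mathcal{Q}_1$, which is immediate from the definitions.
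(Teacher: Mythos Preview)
Your proposal is correct and follows essentially the same approach as the paper: define the linear part $\ell(x)=Cx$ (the paper calls it $w_1$), split $E_\mathcal{P}^w(n)=E_\mathcal{P}^{\ell}(n)+bE_\mathcal{P}(n)$ by linearity, apply Proposition~\ref{linear-weighted-ehrhart} to $\ell$, and rearrange. Your treatment is in fact slightly more careful than the paper's, since you flag the degenerate case $C=0$ (which the paper does not address).
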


\begin{proof} Letting $w_1(x)=Cx$ and applying Proposition~\ref{linear-weighted-ehrhart}, we obtain
\begin{align*}
E_\mathcal{P}^{w_1}(n)&=\sum_{a\in
n\mathcal{P}\cap\mathbb{Z}^s}w_1(a)=\sum_{a\in
n\mathcal{P}\cap\mathbb{Z}^s}(w(x)-b)=E_\mathcal{P}^{w}(n)-bE_\mathcal{P}(n)=E_{\mathcal{Q}_1}(n)-E_\mathcal{P}(n)
 \end{align*}
for all $n\geq 0$, and consequently
$E_\mathcal{P}^{w}(n)=E_{\mathcal{Q}_1}(n)+(b-1)E_\mathcal{P}(n)$ for
all $n\geq 0$. 
\end{proof}

\begin{proposition}\label{jan8-23} 
Let $\mathcal{P}$ be a lattice polytope in $\mathbb{R}^s$ of dimension $s$ and let $f\neq 0$ be a homogeneous polynomial of $S$. 
If $f\geq 0$ on $\mathcal{P}$ and $r$ is a negative integer root of $E_\mathcal{P}$, then $E_\mathcal{P}^f(r)=0$. 
\end{proposition}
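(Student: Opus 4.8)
The plan is to reduce the statement to two reciprocity laws. Write $r=-n$ with $n\geq 1$ a positive integer. Since $\dim\mathcal{P}=s$ the interior $\mathcal{P}^{\rm o}$ is nonempty, and since $f$ is homogeneous with $f\geq 0$ on $\mathcal{P}$, Theorem~\ref{nonneg-poly}(d) (together with part (a)) guarantees that $E_\mathcal{P}^{f}$ agrees with a genuine polynomial of degree $s+p$. In particular $E_\mathcal{P}^{f}(r)$ makes sense as the value of that polynomial at the negative integer $r$, which is what must be shown to vanish.

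The first step is to apply ordinary Ehrhart--Macdonald reciprocity to the unweighted function. Because $\mathcal{P}$ is a full-dimensional lattice polytope, one has
$$
E_\mathcal{P}(-n)=(-1)^{s}\,|n\mathcal{P}^{\rm o}\cap\mathbb{Z}^s|\qquad(n\geq 1),
$$
where $n\mathcal{P}^{\rm o}$ is the interior of $n\mathcal{P}$. Since $r=-n$ is a root of $E_\mathcal{P}$, the left-hand side is $0$, and as $(-1)^{s}\neq 0$ we conclude that $n\mathcal{P}^{\rm o}\cap\mathbb{Z}^s=\emptyset$; that is, $n\mathcal{P}$ contains no interior lattice point.

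The second step is the weighted analogue, which is the key ingredient. Weighted Ehrhart--Macdonald reciprocity for a full-dimensional lattice polytope and a \emph{homogeneous} weight $f$ of degree $p$ reads
$$
E_\mathcal{P}^{f}(-n)=(-1)^{s+p}\sum_{\alpha\in n\mathcal{P}^{\rm o}\cap\mathbb{Z}^s}f(\alpha)\qquad(n\geq 1).
$$
Here homogeneity is precisely what produces the clean sign $(-1)^{s+p}$, via $f(-\alpha)=(-1)^{p}f(\alpha)$. Evaluating at our $n$ and invoking the first step, the index set of the sum on the right is empty, so the sum is $0$; hence $E_\mathcal{P}^{f}(r)=E_\mathcal{P}^{f}(-n)=(-1)^{s+p}\cdot 0=0$, as desired. (Note that the hypothesis $f\geq 0$ plays no role beyond matching Theorem~\ref{nonneg-poly}(d); the vanishing is forced purely by the empty interior.)

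I expect the only genuine work to be justifying the weighted reciprocity identity. If it cannot be cited directly, I would first reduce to a monomial weight by writing $f$ as a $K$-linear combination of degree-$p$ monomials and using linearity of $E^{\bullet}_\mathcal{P}$ (the mechanism behind Theorem~\ref{nonneg-poly}(c)); I would then establish reciprocity for a single monomial from the rational form of the weighted Ehrhart series $F_\mathcal{P}^{f}(x)=h(x)/(1-x)^{s+p+1}$ together with the standard substitution $x\mapsto 1/x$, which interchanges the contributions of $\mathcal{P}$ and $\mathcal{P}^{\rm o}$. A sanity check on the unit cube $\mathcal{P}=[0,1]^{s}$ with $f=t_1$, where $E_\mathcal{P}^{f}(n)=\tfrac{n}{2}(n+1)^{s}$ vanishes at the root $r=-1$ of $E_\mathcal{P}(n)=(n+1)^{s}$, confirms both the identity and the conclusion.
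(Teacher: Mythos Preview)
Your proof is correct and follows essentially the same route as the paper: both invoke Ehrhart--Macdonald reciprocity to deduce that $n\mathcal{P}^{\rm o}\cap\mathbb{Z}^s=\emptyset$ when $r=-n$ is a root of $E_\mathcal{P}$, and then apply the weighted reciprocity law (which the paper cites as \cite[Proposition~4.1]{Brion-Vergne}) to conclude $E_\mathcal{P}^{f}(-n)=0$. Your remark that the hypothesis $f\geq 0$ is not actually used in the vanishing step is a fair observation; the paper's proof likewise does not use it beyond the standing assumptions of the cited reciprocity.
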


\begin{proof} Let $p$ be the degree of $f$ and let $\mathcal{P}^{\rm o}$ be the interior of $\mathcal{P}$. 
By the generalized reciprocity law of Ehrhart \cite[Proposition~4.1]{Brion-Vergne}, for $n\geq 1$ one has
\begin{align*} 
&E_{\mathcal{P}^{\rm o}}(n)=(-1)^{s}E_\mathcal{P}(-n),\\
&E_{\mathcal{P}^{\rm o}}^f(n)=(-1)^{s+p}E_\mathcal{P}^f(-n).
\end{align*}
\quad Setting $n=-r$, from the first equality we get $E_{\mathcal{P}^{\rm o}}(n)=0$, that is, $n\mathcal{P}^{\rm o}\cap\mathbb{Z}^s=\emptyset$. 
Then, $E_{\mathcal{P}^{\rm o}}^f(n)=0$ and, from the second equality, we obtain $E_\mathcal{P}^f(-n)=0$.
\end{proof}

\section{The Ehrhart ring of the $d$-th unit cube}\label{ehrhart-cube}


In this section, we show how to compute generating functions of polynomials using those of unit cubes and Eulerian numbers, and use integer programming to study the algebraic properties of the Ehrhart ring of the $d$-th unit cube. 

\subsection*{Generating functions of polynomials}

For natural numbers $0\leq k\leq d$, the \textit{Eulerian number} $A(d,k)$ is defined by  
\begin{equation}\label{oct9-1-22}
\sum_{n=0}^\infty n^dx^n=\frac{\sum_{k=0}^dA(d,k)x^k}{(1-x)^{d+1}},
\end{equation}
where by convention $A(d,k)=0$ whenever $k>d$. 
By \cite[Corollary~4.3.1]{Sta5}, Eulerian numbers are well defined and by \cite[Proposition~A.4(d)]{Duran}, for $d\in\mathbb{N}_+$ one has
\begin{equation}\label{oct9-2-22}
A(d,k)=\sum_{j=0}^k(-1)^j\binom{d+1}{j}(k-j)^d,\quad 0\leq k\leq d.
\end{equation}
\quad On the other hand, let $C_d=[0,1]^d={\rm conv}(\{0,1\}^d)$ be
the $d$-th unit cube, $d\geq 1$, and let $C_0={\rm conv}(\{1\})$. It
is well known that the Ehrhart polynomial of $C_d$ is $(n+1)^d$ and
that the Ehrhart series of $C_d$ is
\begin{equation}\label{oct9-3-22}
F_{C_d}(x)=\frac{\sum_{k=1}^dA(d,k)x^{k-1}}{(1-x)^{d+1}}
\end{equation}
if $d\geq 1$ and $F_{C_0}(x)=1/(1-x)$ if $d=0$, see \cite[p.~31]{Duran}.

\begin{proposition}{\rm\cite[Corollary~4.3.1]{Sta5}} 
The generating function $\sum_{n=0}^\infty g(n)x^n$ of a polynomial $g(n)=\sum_{i=0}^r b_in^i$, $n\in\mathbb{N}$, $b_i\in K$, is a rational function. 
\end{proposition}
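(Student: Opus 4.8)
The plan is to exploit the linearity of the generating-function operator $g\mapsto\sum_{n=0}^\infty g(n)x^n$ together with the Eulerian-number identity \eqref{oct9-1-22}, which already expresses each pure-power generating function $\sum_{n=0}^\infty n^d x^n$ as a rational function. The whole argument is then a matter of splitting $g$ into monomials, rewriting each piece via \eqref{oct9-1-22}, and collecting over a common denominator.

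First I would use the given form $g(n)=\sum_{i=0}^r b_in^i$ to write
$$
\sum_{n=0}^\infty g(n)x^n=\sum_{n=0}^\infty\left(\sum_{i=0}^r b_in^i\right)x^n=\sum_{i=0}^r b_i\left(\sum_{n=0}^\infty n^ix^n\right),
$$
where the interchange of the two sums is harmless because the sum over $i$ is finite. This reduces the claim to the rationality of each $\sum_{n=0}^\infty n^ix^n$. Next I would invoke \eqref{oct9-1-22} to substitute, for each $0\leq i\leq r$,
$$
\sum_{n=0}^\infty n^ix^n=\frac{\sum_{k=0}^iA(i,k)x^k}{(1-x)^{i+1}},
$$
so that every summand is manifestly a rational function; the case $i=0$ reduces to $1/(1-x)$.

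Finally, I would place these fractions over the common denominator $(1-x)^{r+1}$ by multiplying the $i$-th numerator by $(1-x)^{r-i}$, obtaining
$$
\sum_{n=0}^\infty g(n)x^n=\frac{\sum_{i=0}^r b_i(1-x)^{r-i}\sum_{k=0}^iA(i,k)x^k}{(1-x)^{r+1}}=\frac{h(x)}{(1-x)^{r+1}},
$$
where $h(x)\in K[x]$ has degree at most $r$. This exhibits the generating function explicitly as a rational function and completes the proof.

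There is no substantial obstacle here, since the argument is just linearity applied to the precomputed expansion \eqref{oct9-1-22}. The only point deserving a word of care is the constant term $i=0$: one must read $n^0$ with the convention $0^0=1$ so that the $n=0$ contribution is exactly $g(0)=b_0$, while every higher power $n^i$ with $i\geq 1$ vanishes at $n=0$ and causes no trouble.
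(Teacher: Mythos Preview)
Your proof is correct. The paper does not give a separate proof of this proposition (it is cited from Stanley), but the immediately following Lemma~\ref{oct12-22} is proved by precisely the same linearity-plus-Eulerian-number argument you use, only phrased in terms of the Ehrhart series $F_{C_d}$ of the unit cubes rather than directly via \eqref{oct9-1-22}.
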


Unit cubes are useful to determine the generating function of $g$ because one has:  

\begin{lemma}\label{oct12-22} If $g(n)=b_0+b_1n+\cdots+b_rn^r$ for
$n\in\mathbb{N}$, $b_i\in K$, then 
$$
\sum_{n=0}^{\infty}g(n)x^n=b_0F_{C_0}(x)+\sum_{i=1}^rb_ixF_{C_i}(x). 
$$
\end{lemma}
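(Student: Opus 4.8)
The plan is to reduce the identity to the known generating functions of the unit cubes by pure linearity of the formal-power-series map $g \mapsto \sum_{n} g(n)x^n$. The starting point is the fact, recorded just before the statement, that the Ehrhart polynomial of $C_i$ is $(n+1)^i$. By the very definition of the Ehrhart series this gives, for $i\geq 1$,
$$
F_{C_i}(x)=\sum_{n=0}^\infty E_{C_i}(n)x^n=\sum_{n=0}^\infty(n+1)^ix^n,
$$
while for $i=0$ one has $F_{C_0}(x)=\sum_{n=0}^\infty x^n=1/(1-x)$. These are the only external facts I would invoke.

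Next I would expand $g$ and interchange the finite and infinite sums, which is legitimate at the level of formal power series:
$$
\sum_{n=0}^{\infty}g(n)x^n=\sum_{i=0}^{r}b_i\sum_{n=0}^{\infty}n^ix^n.
$$
The constant term is handled immediately: since $n^0=1$ for all $n$ (including $n=0$), the $i=0$ summand is $b_0\sum_{n=0}^\infty x^n=b_0F_{C_0}(x)$, which already matches the first term on the right-hand side of the asserted formula.

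The one computational point requiring care is to identify the power sum $\sum_{n=0}^\infty n^ix^n$ with $xF_{C_i}(x)$ for $i\geq 1$; this is where the extra factor $x$ in the statement comes from. Performing the index shift $m=n+1$ in the series for $F_{C_i}$ gives
$$
xF_{C_i}(x)=\sum_{n=0}^\infty(n+1)^ix^{n+1}=\sum_{m=1}^\infty m^ix^m,
$$
and because the $n=0$ summand of $\sum_{n=0}^\infty n^ix^n$ vanishes for $i\geq 1$, this last series equals $\sum_{n=0}^\infty n^ix^n$. Substituting back into the linearity expansion yields the claimed identity. I expect no real obstacle here: the argument is a direct manipulation of formal power series, and the only subtlety is bookkeeping the index shift that produces the factor $x$ (equivalently, one may instead compare the Eulerian-number expressions, noting $A(i,0)=0$ for $i\geq 1$ by \eqref{oct9-2-22}, so that $xF_{C_i}(x)$ and $\sum_n n^ix^n$ have identical numerators over $(1-x)^{i+1}$).
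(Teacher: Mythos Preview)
Your proof is correct and follows the same overall structure as the paper: reduce by linearity to the identity $\sum_{n\ge 0} n^i x^n = xF_{C_i}(x)$ for $i\ge 1$, then handle $i=0$ separately. The only difference is in how that key identity is established: the paper derives it from the Eulerian-number expressions \eqref{oct9-1-22} and \eqref{oct9-3-22} together with $A(d,0)=0$, whereas you obtain it more directly by the index shift $m=n+1$ in $F_{C_i}(x)=\sum_{n\ge 0}(n+1)^i x^n$ --- an approach you yourself note is equivalent to the Eulerian-number comparison in your closing parenthetical.
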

\begin{proof}  
By Eq.~\eqref{oct9-2-22}, $A(d,0)=0$. 
Then, from Eqs.~\eqref{oct9-1-22} and \eqref{oct9-3-22}, we obtain that 
\begin{align*}
\sum_{n=0}^\infty n^dx^n=\frac{\sum_{k=0}^dA(d,k)x^k}{(1-x)^{d+1}}
=\frac{\sum_{k=1}^dA(d,k)x^k}{(1-x)^{d+1}}
=\frac{x\sum_{k=1}^dA(d,k)x^{k-1}}{(1-x)^{d+1}}=xF_{C_d}(x)
\end{align*}
for $d\geq 1$, and consequently $\sum_{n=0}^\infty n^dx^n=xF_{C_d}$ for $d\geq 1$. 
Hence
\begin{align*}
\sum_{n=0}^{\infty}g(n)x^n&=b_0\sum_{n=0}^\infty x^n+b_1\sum_{n=0}^\infty nx^n+b_2\sum_{n=0}^\infty n^2x^n+\cdots+b_r\sum_{n=0}^\infty n^rx^n\\ 
&=b_0F_{C_0}(x)+\sum_{i=1}^rb_ixF_{C_i}(x),
\end{align*}
and the proof is complete.
\end{proof} 

Using the explicit formulas to compute $F_{C_d}$ given in Eqs.~\eqref{oct9-2-22}-\eqref{oct9-3-22}, we can compute weighted Ehrhart series by recalling that weighted Ehrhart functions are polynomials \cite{Brion-Vergne} and that they can be computed using polynomial interpolation (Theorem~\ref{nonneg-poly}(a)). 

\subsection*{The Ehrhart ring of the $\mathbf{d}$-th unit cube}

\begin{proposition}\label{ehrhart-cube-prop} 
Let $\mathcal{P}=[0,1]^d$ be the $d$-th unit cube in $\mathbb{R}^d$ and let $A(\mathcal{P})$ be the Ehrhart ring of
$\mathcal{P}$. 
The following hold
\begin{enumerate} 
\item[\rm(a)] The polytopal ring $K[\mathcal{P}]:=K[t^az\mid a\in\mathcal{P}\cap\mathbb{Z}^d]$ is normal.
\item[\rm(b)] $A(\mathcal{P})=K[\mathcal{P}]$.
\item[\rm(c)] $A(\mathcal{P})$ is a Gorenstein ring whose $a$-invariant is equal to $-2$.
\end{enumerate}
\end{proposition}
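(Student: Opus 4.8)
The plan is to present all three parts as consequences of a single combinatorial description of the relevant semigroup. First I would note that $\mathcal{P}\cap\mathbb{Z}^d=\{0,1\}^d$ and, more generally, $n\mathcal{P}\cap\mathbb{Z}^d=\{0,1,\dots,n\}^d$ for every $n\geq 0$. Using the grading of the Ehrhart ring recalled in the introduction, this gives
$$
A(\mathcal{P})=K[S],\qquad S:=\{(a,n)\in\mathbb{Z}^{d+1}\mid 0\leq a_i\leq n,\ i=1,\dots,d\}=C\cap\mathbb{Z}^{d+1},
$$
where $C\subset\mathbb{R}^{d+1}$ is the rational cone defined by the $2d$ inequalities $a_i\geq 0$ and $n-a_i\geq 0$. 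On the other side, $K[\mathcal{P}]=K[S']$, where $S'$ is the affine semigroup generated by $\{(a,1)\mid a\in\{0,1\}^d\}$, so $S'\subseteq S$ is immediate.

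To obtain parts (a) and (b) together I would prove the reverse inclusion $S\subseteq S'$. Given $(a,n)\in S$, I would decompose $a$ coordinatewise: since $0\leq a_i\leq n$ for each $i$, one may choose $b^{(1)},\dots,b^{(n)}\in\{0,1\}^d$ so that, for each fixed $i$, exactly $a_i$ of the entries $b^{(1)}_i,\dots,b^{(n)}_i$ equal $1$; then $(a,n)=\sum_{j=1}^{n}(b^{(j)},1)\in S'$. Thus $S'=S$, whence $K[\mathcal{P}]=K[S]=A(\mathcal{P})$, which is part (b). Since $S$ is the full set of lattice points of the cone it generates, $S$ is a normal (saturated) affine semigroup, and therefore $K[\mathcal{P}]$ is normal, which is part (a) (in agreement with the general normality of Ehrhart rings).

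For part (c) I would use the Danilov--Stanley description of the canonical module of the normal semigroup ring $A(\mathcal{P})=K[S]$ \cite{BHer}: it is the ideal
$$
\omega=\bigl(\{t^\alpha z^m\mid (\alpha,m)\in {\rm relint}(C)\cap\mathbb{Z}^{d+1}\}\bigr)
$$
generated by the monomials whose exponent lies in the relative interior of $C$, that is, with $0<\alpha_i<m$ for all $i$. The crux is to identify these interior points. Setting $\alpha_0:=(1,\dots,1,2)\in\mathbb{Z}^{d+1}$, I claim ${\rm relint}(C)\cap\mathbb{Z}^{d+1}=\alpha_0+S$: indeed $(\alpha,m)$ satisfies $1\leq\alpha_i\leq m-1$ for all $i$ if and only if $(\alpha,m)-\alpha_0=(\alpha_1-1,\dots,\alpha_d-1,\,m-2)$ lies in $S$. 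Hence $\omega=t_1\cdots t_d z^{2}\cdot A(\mathcal{P})$ is principal, so $A(\mathcal{P})$ is Gorenstein; and since this generator has degree $2$ in the $z$-grading, $\omega\cong A(\mathcal{P})(-2)$ and the $a$-invariant equals $-2$.

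The coordinatewise decomposition and the affine shift $\alpha_0+S$ are routine once set up; the only delicate input is the fact used in (c) that the $2d$ inequalities $a_i\geq 0$ and $n-a_i\geq 0$ are exactly the facet-defining inequalities of $C$ (equivalently, that $x_i=0$ and $x_i=1$ exhaust the facets of the cube), so that the relative interior of $C$ is given precisely by the strict inequalities $0<\alpha_i<m$. This facet bookkeeping is where I expect the main, albeit modest, difficulty to lie.
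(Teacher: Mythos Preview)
Your argument is correct, but it follows a genuinely different route from the paper's. The paper treats (a), (b), (c) separately via external machinery: for (a) it verifies that the linear system $x\geq 0,\ xD\leq 1$ (with $D=(1,\dots,1)^\top$) has the \emph{integer rounding property} and invokes \cite[Theorem~2.5]{ainv}; for (b) it checks that the matrix with columns $\{(a,1):a\in\{0,1\}^d\}$ has gcd of maximal minors equal to $1$ and invokes \cite[Theorem~3.9]{ehrhart} to identify $\overline{K[\mathcal{P}]}$ with $A(\mathcal{P})$; for (c) it again appeals to \cite[Theorems~4.2 and 4.4]{ainv}, reading off $a=-\max\{1+|e_i|\}=-2$ and Gorensteinness, and remarks that $\omega_{K[\mathcal{P}]}=(t_1\cdots t_dz^2)$. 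Your proof instead establishes the single semigroup identity $S'=S=C\cap\mathbb{Z}^{d+1}$ by a bare-hands coordinatewise $0/1$ decomposition, which simultaneously yields (a) and (b), and then reads off (c) from the Danilov--Stanley description of $\omega$ via the shift by $(1,\dots,1,2)$. The paper's route illustrates how the cube fits into the general integer-rounding/optimization framework it wishes to advertise; your route is shorter and fully self-contained for this particular polytope, and makes the generator $t_1\cdots t_dz^2$ of the canonical module (which the paper only records) transparent.
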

\begin{proof}  
(a) Let $D$ be the $d\times 1$ matrix $(1,\ldots,1)^\top$. 
Note that $\mathcal{P}\cap\mathbb{Z^d}=\{0,1\}^d$, $\mathcal{P}$ is equal to ${\rm conv}(\{0,1\}^d)$, and the polytopal ring of $\mathcal{P}$ can be written as
\begin{align*}
K[\mathcal{P}]&=K[t^az\mid a=0\mbox{ or }a=e_{i_1}+\cdots+e_{i_k},\, 1\leq i_1<\cdots<i_k\leq d]\\
&=K[t^az\mid a\in\mathbb{N}^d,\, a\leq (1,\ldots,1)].
\end{align*}
\quad For each vector $a=(a_1,\ldots,a_d)\in\mathbb{N}^d$, one has 
\begin{align*}
\lceil{\rm min}\{\langle y,{1}\rangle\mid y\geq 0;\, Dy\geq a \}\rceil&=\lceil{\rm min}\{y\mid y\geq 0;\, y\geq a_i,\,
i=1,\ldots,d\}\rceil =\max\{a_i\}_{i=1}^d,\\
{\rm min}\{\langle y,{1}\rangle\mid y\geq 0;\, Dy\geq a \}&={\rm min}\{y\mid y\in\mathbb{N};\, y\geq a_i,\, i=1,\ldots,d\}=\max\{a_i\}_{i=1}^d.
\end{align*}
\quad This means that the linear system $x\geq 0; xD\leq{1}$ has the \textit{integer rounding property} in the sense of
\cite[Definition~2.2, Remark~2.3]{ainv}. 
Hence, by \cite[Theorem~2.5]{ainv}, the ring $K[\mathcal{P}]$ is normal. 
The normality of $K[\mathcal{P}]$ also follows using the fact that the $d$-th unit cube $[0,1]^d$ has a unimodular triangulation \cite{BGT,deloera-book}. 

(b) Consider the subset of $\mathbb{R}^{d+1}$ given 
$$
\mathcal{U}:=\{(0,1)\}\cup\{(e_{i_1}+\cdots+e_{i_k},1)\mid 1\leq
i_1<\cdots<i_k\leq d\},
$$
and let $C$ be the matrix whose columns are the vectors in $\mathcal{U}$. 
The greatest common divisor of all the non-zero $(d+1)\times(d+1)$ minors of $C$ is equal to $1$ because $C$ diagonalizes over $\mathbb{Z}$ to an ``identity matrix'' $[I_{d+1},0]$. 
Then, by \cite[Theorem~3.9]{ehrhart}, the integral closure of $K[\mathcal{P}]$ is equal to $A(\mathcal{P})$. 
Thus, $K[\mathcal{P}]=A(\mathcal{P})$ because $K[\mathcal{P}]$ is normal by part (a).

(c) The set of vertices of the polytope $\{x\mid x\geq 0; xD\leq{1}\}$ is equal to $\{0,e_1,\ldots,e_d\}$ and, by the proof of part (a), the linear system $x\geq 0; xD\leq{1}$ has the integer rounding property.
Then, by \cite[Theorem~4.2]{ainv}, the $a$-invariant of the ring $K[\mathcal{P}]$ is $-\max\{1+|e_i|\}_{i=1}^d=-2$ and, by \cite[Theorem~4.4]{ainv}, $K[\mathcal{P}]$ is a Gorenstein ring. 
The canonical module $\omega_{K[\mathcal{P}]}$ of $K[\mathcal{P}]$ is generated by $f=t_1\cdots t_sz^2$, that is, $\omega_{K[\mathcal{P}]}=fK[\mathcal{P}]$.
\end{proof}


\section{Interaction of weighted Ehrhart theory and enumeration problems}\label{enumeration-problem}

Let $\mathcal{P}={\rm conv}(v_1,\ldots,v_m)$ be a lattice polytope of dimension $d$ whose vertices are in $\mathbb{N}^s$.
Let $w_i\colon\mathbb{R}^s\rightarrow\mathbb{R}$, $i=1,\ldots,p$, be linear functions such that $w_i(e_j)\in\mathbb{N}$ for all $i,j$ and let $K[y_1,\ldots,y_p,z]$ be a polynomial ring over the field $K$. 
We let $w:=(w_1,\ldots,w_p)$ and 
$$
y^{w(a)}:=y^{(w_1(a),\ldots,w_p(a))}:=y_1^{w_1(a)}\cdots y_p^{w_p(a)} \ \mbox{ for }a\in\mathbb{N}^s.
$$
\quad We are interested in the following problem:
Is the function $H\colon\mathbb{N}\rightarrow\mathbb{N}$,
$$
H(n):=|w(n\mathcal{P}\cap\mathbb{Z}^s)|,
$$
a polynomial function? 
We give a positive answer to this problem and provide a method to compute $H$ by identifying a weighted Ehrhart ring $A^w(\mathcal{P})$ which is a finitely generated graded $K$-algebra, generated by monomials, whose Hilbert function is $H$:
$$
A^w(\mathcal{P}):=K[\{y^{w(a)}z^n\mid a\in n\mathcal{P}\cap\mathbb{Z}^s\}]\subset K[y_1,\ldots,y_p,z],
$$
where $z$ is a new variable, see Theorem~\ref{jdl2}. 
The computation of $H$ reduces to the computation of the Hilbert function of the toric ideal of $A^w(\mathcal{P})$.  
Part of the difficulty in solving this problem is that despite the equality $w(n\mathcal{P})=nw(\mathcal{P})$, the inclusion 
$$
w(n\mathcal{P}\cap\mathbb{Z}^s)\subset w(n\mathcal{P})\cap\mathbb{Z}^p
$$
can be strict (Example~\ref{example7}). 
The hypothesis that $w_i$ is linear for $i=1,\ldots,p$ is essential to prove that $A^w(\mathcal{P})$ is finitely generated (see \cite[Proposition 2.2]{ehrhartw}).

\begin{theorem}\label{jdl2}
Let $\mathcal{P}={\rm conv}(v_1,\ldots,v_m)$ be a lattice polytope
whose vertices are in $\mathbb{N}^s$ and let $B$ be the matrix whose
columns are $\{(w(v_i),1)\}_{i=1}^m$.  
Let $w_i\colon\mathbb{R}^s\rightarrow\mathbb{R}$, $i=1,\ldots,p$, be
linear functions such that $w_i(e_j)\in\mathbb{N}$ for all $i,j$ and
let $w=(w_1,\ldots,w_p)$.   
The following holds.
\begin{enumerate}
\item[\rm (a)] $A^w(\mathcal{P}):=K[\{y^{w(a)}z^n\mid a\in
n\mathcal{P}\cap\mathbb{Z}^s\}]=\bigoplus_{i=0}^{\infty}A^w(\mathcal{P})_n$
is a graded $K$-algebra whose $n$-th graded component is given by   
\begin{align*}
A^w(\mathcal{P})_n:=\sum_{\scriptstyle a\in n\mathcal{P}\cap\mathbb{Z}^{s}}\hspace{-4mm}K y^{w(a)}z^n,\ \forall\, n\geq 0.
\end{align*}
\item[\rm (b)] The Hilbert function $H_{A^w(\mathcal{P})}$ of $A^w(\mathcal{P})$ is given by  
$$
H_{A^w(\mathcal{P})}(n):=\dim_K(A^w(\mathcal{P})_n)=|w(n\mathcal{P}\cap\mathbb{Z}^{s})|,\,\ \forall\, n\geq 0.
$$ 
\item[\rm (c)] There are $t^{c_1}z^{d_1},\ldots,t^{c_r}z^{d_r}\in A(\mathcal{P})$ such that 
$$
A(\mathcal{P})=K[\{t^{c_i}z^{d_i}\mid i=1,\ldots,r\}]\ \mbox{ and }\
A^w(\mathcal{P})=K[\{y^{w(c_i)}z^{d_i}\mid i=1,\ldots,r\}].  
$$
\item[\rm (d)] $L:=K[\{y^{w(v_i)}z\mid i=1,\ldots,m\}]\subset
A^w(\mathcal{P})$ is a finite integral extension. 
\item[\rm (e)] $\overline{L}\subset\overline{A^w(\mathcal{P})}\subset
A(w(\mathcal{P}))$ with equality everywhere if and only if the gcd of
all non-zero $k\times k$ minors of the matrix $B$ is equal to $1$, where $k={\rm rank}(B)$.  
\item[\rm (e)] The Hilbert function $H_{A^w(\mathcal{P})}$ of
$A^w(\mathcal{P})$ is a polynomial function of degree
$\dim(A^w(\mathcal{P}))-1$ and the Hilbert series
$F_{A^w(\mathcal{P})}$ of $A^w(\mathcal{P})$ has the form   
$$
F_{A^w(\mathcal{P})}(x)=\frac{h(x)}{(1-x)^{\dim(A^w(\mathcal{P}))}},
$$ 
where $h(1)\neq 0$ and $h(x)\in\mathbb{Z}[x]$.
\end{enumerate}
\end{theorem}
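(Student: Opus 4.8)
The plan is to realize $A^w(\mathcal{P})$ as the image of the Ehrhart ring $A(\mathcal{P})$ under a single explicit graded homomorphism and then transport the known structure of $A(\mathcal{P})$ across it. Since each $w_i$ is linear with $w_i(e_j)\in\mathbb{N}$, for every $a=(a_1,\ldots,a_s)\in\mathbb{N}^s$ we have $w(a)=\sum_{j=1}^s a_j\,w(e_j)$, and hence $y^{w(a)}=\prod_{j=1}^s\left(y^{w(e_j)}\right)^{a_j}$. I would therefore introduce the $K$-algebra homomorphism
$$
\psi\colon S[z]\longrightarrow K[y_1,\ldots,y_p,z],\qquad \psi(t_j)=y^{w(e_j)},\quad \psi(z)=z,
$$
for which $\psi(t^az^n)=y^{w(a)}z^n$ for all $a\in\mathbb{N}^s$, $n\geq 0$. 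As $\psi$ sends each generator $t^az^n$ of $A(\mathcal{P})$ (with $a\in n\mathcal{P}\cap\mathbb{Z}^s$) to the generator $y^{w(a)}z^n$ of $A^w(\mathcal{P})$, its restriction gives a surjection $A(\mathcal{P})\to A^w(\mathcal{P})$. Because $\psi$ is homogeneous for the $z$-grading, $A^w(\mathcal{P})=\psi(A(\mathcal{P}))$ inherits the $\mathbb{N}$-grading with $n$-th component $\psi(A(\mathcal{P})_n)=\sum_{a\in n\mathcal{P}\cap\mathbb{Z}^s}Ky^{w(a)}z^n$, giving (a); and since two monomials $y^{w(a)}z^n$, $y^{w(b)}z^n$ coincide exactly when $w(a)=w(b)$, the dimension of this component is the number of distinct values $w(a)$, namely $|w(n\mathcal{P}\cap\mathbb{Z}^s)|$, giving (b).

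For (c) I would take any finite monomial generating set $A(\mathcal{P})=K[\{t^{c_i}z^{d_i}\}_{i=1}^r]$ (which exists, $A(\mathcal{P})$ being a finitely generated $K$-algebra) and apply $\psi$ to obtain $A^w(\mathcal{P})=K[\{y^{w(c_i)}z^{d_i}\}_{i=1}^r]$. For (d) I would first show that $R_0:=K[\{t^{v_i}z\}_{i=1}^m]\subset A(\mathcal{P})$ is module-finite: each $(c_i,d_i)$ lies in ${\rm cone}\{(v_j,1)\}_{j=1}^m$, so some positive multiple $\ell(c_i,d_i)$ lies in the semigroup $\mathbb{N}\{(v_j,1)\}$, whence $(t^{c_i}z^{d_i})^{\ell}\in R_0$ and $t^{c_i}z^{d_i}$ is integral over $R_0$; thus $A(\mathcal{P})$ is generated as a $K$-algebra by finitely many elements integral over $R_0$, hence is module-finite over it. Applying $\psi$, which carries a finite $R_0$-generating set of $A(\mathcal{P})$ to a finite $L$-generating set of $A^w(\mathcal{P})$, shows that $L=\psi(R_0)\subset A^w(\mathcal{P})$ is a finite integral extension.

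For (e), note that $a\in n\mathcal{P}\cap\mathbb{Z}^s$ forces $w(a)\in nw(\mathcal{P})\cap\mathbb{Z}^p$, so $A^w(\mathcal{P})\subset A(w(\mathcal{P}))$; as the latter is a normal Ehrhart ring, taking integral closures yields $\overline{L}\subset\overline{A^w(\mathcal{P})}\subset A(w(\mathcal{P}))$. Equality throughout is then equivalent to the single outer equality $\overline{L}=A(w(\mathcal{P}))$. Now $L$ is the affine semigroup ring of $\mathbb{N}\{(w(v_i),1)\}_{i=1}^m$, so $\overline{L}$ is the semigroup ring of ${\rm cone}\{(w(v_i),1)\}\cap\Lambda$, where $\Lambda$ is the lattice generated by the columns of $B$, whereas $A(w(\mathcal{P}))$ is the semigroup ring of the same cone intersected with the full lattice $\mathbb{Z}^{p+1}$. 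These agree precisely when $\Lambda$ is saturated, which by Smith normal form is equivalent to the gcd of the $k\times k$ minors of $B$ being equal to $1$; this is exactly \cite[Theorem~3.9]{ehrhart} applied to the point configuration given by the columns of $B$.

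For the final assertion I would use that, by (d), $A^w(\mathcal{P})$ is a finitely generated graded module over $L$, and that $L$ is \emph{standard graded}, being generated in degree one by the $y^{w(v_i)}z$. Standard Hilbert--Serre theory (see, e.g., \cite{BHer}) then gives $F_{A^w(\mathcal{P})}(x)=h(x)/(1-x)^{\delta}$ with $h(x)\in\mathbb{Z}[x]$ and $h(1)\neq 0$, where $\delta$ is the order of the pole at $x=1$, which equals the Krull dimension $\dim A^w(\mathcal{P})=\dim L={\rm rank}(B)$; expanding in partial fractions shows $H_{A^w(\mathcal{P})}$ agrees with a polynomial of degree $\delta-1=\dim A^w(\mathcal{P})-1$ for $n\gg 0$, so it is a polynomial function of that degree. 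I expect the crux to be item (e): one must identify all three rings as affine semigroup rings, check that the chain of integral closures collapses to the single comparison $\overline{L}=A(w(\mathcal{P}))$, and match lattice saturation with the minor-gcd hypothesis so that \cite[Theorem~3.9]{ehrhart} applies verbatim. A secondary point requiring care is that in the final assertion one must work over $L$ rather than over $A^w(\mathcal{P})$ itself, which need not be generated in degree one; otherwise the Hilbert function would only be guaranteed quasi-polynomial.
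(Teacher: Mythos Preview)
Your proof is correct and follows essentially the same route as the paper: finite generation of $A(\mathcal{P})$ for (c), integrality via positive rational cone membership for (d), normality of Ehrhart rings together with \cite[Theorem~3.9]{ehrhart} for (e), and Hilbert--Serre over the standard graded subalgebra $L$ for the last part. The one organizational difference is that you package (a)--(d) via the single graded surjection $\psi\colon A(\mathcal{P})\to A^w(\mathcal{P})$, $t_j\mapsto y^{w(e_j)}$, whereas the paper verifies each statement directly on the $A^w$ side (checking $A^w(\mathcal{P})_kA^w(\mathcal{P})_n\subset A^w(\mathcal{P})_{k+n}$ by hand, proving the two inclusions in (c) separately, and writing down explicit integral equations for the $y^{w(c_j)}z^{d_j}$ over $L$); your device is a mild but genuine streamlining, since the graded structure, the generating set, and module-finiteness over $L$ all come for free as images under $\psi$.
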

\begin{proof} 
(a) The inclusion $A^w(\mathcal{P})_kA^w(\mathcal{P})_n\subset A^w(\mathcal{P})_{k+n}$ for all $k,n\in\mathbb{N}$ follows from the linearity of $w_1,\ldots,w_p$ and the convexity of $\mathcal{P}$. 
Indeed, let $y^{w(a)}z^k\in A^w(\mathcal{P})_k$ and $y^{w(b)}z^n\in A^w(\mathcal{P})_n$. 
Then, $a\in k\mathcal{P}\cap\mathbb{Z}^s$, $b\in n\mathcal{P}\cap\mathbb{Z}^s$, and by the convexity of $\mathcal{P}$, one has that $(a+b)/(k+n)\in\mathcal{P}$. 
Hence, by the linearity of $w$, we get $(y^{w(a)}z^k)(y^{w(b)}z^n)=y^{w(a+b)}z^{k+n}\in A^w(\mathcal{P})_{k+n}$.

(b) For each $n\in\mathbb{N}$, the map
$$ 
w(n\mathcal{P}\cap\mathbb{Z}^s)\rightarrow\{y^{w(a)}z^n\mid a\in n\mathcal{P}\cap\mathbb{Z}^s\},\ w(a)\mapsto y^{w(a)}z^n,
$$
is bijective. 
Thus, $|w(n\mathcal{P}\cap\mathbb{Z}^{s})|=\dim_K(A^w(\mathcal{P})_n)$.

(c) The existence of $t^{c_1}z^{d_1},\ldots,t^{c_r}z^{d_r}\in A(\mathcal{P})$ such that $A(\mathcal{P})=K[\{t^{c_i}z^{d_i}\mid i=1,\ldots,r\}]$ follows from \cite[Theorem~9.3.6]{monalg-rev}. 
We now show the equality:
$$
A^w(\mathcal{P})=K[\{y^{w(c_i)}z^{d_i}\mid i=1,\ldots,r\}].
$$ 
\quad The inclusion ``$\supset$'' is clear because $c_i\in d_i\mathcal{P}\cap\mathbb{Z}^s$ and $y^{w(c_i)}z^{d_i}\in A^w(\mathcal{P})$ for all $i$. 
To show the inclusion ``$\subset$'' take $y^{w(a)}z^n\in A^w(\mathcal{P})$. 
Then, $t^az^n\in A(\mathcal{P})$ and $a\in n\mathcal{P}\cap\mathbb{Z}^s$, and consequently  
\begin{align*}
(a,n)&=n_1(c_1,d_1)+\cdots+n_r(c_r,d_r),\ n_i\in \mathbb{N},\\
w(a)&= n_1w(c_1)+\cdots+n_r w(c_r),\ n=n_1d_1+\cdots+n_rd_r\quad \therefore\\ 
y^{w(a)}z^n&=(y^{w(c_1)}z^{d_1})^{n_1}\cdots (y^{w(c_r)}z^{d_r})^{n_r},
\end{align*}
and $y^{w(a)}z^n\in K[\{y^{w(c_i)}z^{d_i}\mid i=1,\ldots,r\}]$.

(d) By part (c), it suffices to show that $y^{w(c_j)}z^{d_j}$ is integral over $K[\{y^{w(v_i)}z\mid i=1,\ldots,m\}]$ for all $j$. 
Since $(c_j,d_j)\in\mathbb{R}_+\{(v_i,1)\}_{i=1}^m$, there is $p\in\mathbb{N}_+$ such that
\begin{align*}
p(c_j,d_j)&=n_{1,j}(v_1,1)+\cdots+n_{m,j}(v_m,1),\ n_{i,j}\in \mathbb{N},\\
pc_j&=n_{1,j}v_1+\cdots+n_{m,j}v_m,\\
pw(c_j)&=w(pc_j)= n_{1,j}w(v_1)+\cdots+n_{m,j}w(v_m),\ pd_j=n_{1,j}+\cdots+n_{m,j}\quad \therefore\\ 
(y^{w(c_j)}z^{d_j})^p&=y^{pw(c_j)}z^{pd_j}=\big(y^{w(v_1)}\big)^{n_{1,j}}\cdots\big(y^{w(v_m)} \big)^{n_{m,j}}z^{n_{1,j}+\cdots+n_{m,j}}\\
&=\big(y^{w(v_1)}z\big)^{n_{1,j}}\cdots\big(y^{w(v_m)}z \big)^{n_{m,j}},
\end{align*}
and $y^{w(c_j)}z^{d_j}$ is integral over $K[\{y^{w(v_i)}z\mid i=1,\ldots,m\}]$.

(e) First we show $\overline{L}\subset\overline{A^w(\mathcal{P})}\subset A(w(\mathcal{P}))$. 
By part (c) and the fact that the Ehrhart ring $A(w(\mathcal{P}))$ is normal \cite[Theorem~9.3.6(c)]{monalg-rev}, we need only show the inclusion $A^w(\mathcal{P})\subset A(w(\mathcal{P}))$. 
By part (c), it suffices to show that $y^{w(c_j)}z^{d_j}\in A(w(\mathcal{P}))$ for all $j$. 
Since $(c_j,d_j)\in\mathbb{R}_+\{(v_i,1)\}_{i=1}^m$, we obtain 
\begin{align*}
(c_j,d_j)&=\lambda_{1,j}(v_1,1)+\cdots+\lambda_{m,j}(v_m,1),\ \lambda_{i,j}\in \mathbb{R}_+,\\
w(c_j)&=\lambda_{1,j}w(v_1)+\cdots+\lambda_{m,j}w(v_m),\ d_j=\lambda_{1,j}+\cdots+\lambda_{m,j}\quad \therefore\\ 
(w(c_j),d_j)&=\lambda_{1,j}(w(v_1),1)+\cdots+\lambda_{m,j}(w(v_m),1).
\end{align*}
\quad Thus, $y^{w(c_j)}z^{d_j}\in A(w(\mathcal{P}))$ because $w(\mathcal{P})={\rm conv}(w(v_1),\ldots,w(v_m))\subset\mathbb{R}^p$.
The second assertion follows directly from \cite[Theorem~3.9]{ehrhart}.

(f) By parts (a)-(e), $A^w(\mathcal{P})$ is a finitely generated graded module over the standard graded algebra $K[\{y^{w(v_i)}z\mid i=1,\ldots,m\}]$.
Then, by the Hilbert-Serre theorem \cite[Theorem~5.1.4]{monalg-rev}, we get that the Hilbert series of $A^w(\mathcal{P})$ has the form $h(x)/(1-x)^{\dim(A^w(\mathcal{P}))}$, where $h(1)\neq 0$ and $h(x)\in\mathbb{Z}[x]$, and $H_{A^w(\mathcal{P})}(n)$ is a polynomial function of degree $\dim(A^w(\mathcal{P}))-1$. 
\end{proof} 


\section{Examples of weighted Ehrhart functions and their generating functions}\label{examples-section}

In this part, we provide examples using the procedures for
\textit{Macaulay}$2$ \cite{mac2} and \textit{Normaliz}
\cite{normaliz2} of Appendix~\ref{procedures-wehrhart}. Our examples
help to have a better grasp of weighted Ehrhart theory, especially
 for those who are new to this theory, and complement some of the
results on the subject.  

\begin{example}{\rm\cite[Example~4.9]{ehrhartw}}\label{example0}\rm\ Let $\mathcal{P}={\rm
conv}(v_1,v_2,v_3,v_4)$ be the unit square in $\mathbb{R}^2$, where
$v_1=(0,0)$, $v_2=(1,0)$, $v_3=(0,1)$, $v_4=(1,1)$, and let $w_i$,
$i=1,2,3$, be the linear functions  $w_1=t_1+t_2$, $w_2=2t_1+3t_2$ and $w_3=t_1$.    
The following list of weighted Ehrhart functions was used in
\cite{ehrhartw} to illustrate a novel approach relating 
classical Ehrhart rings and weighted Ehrhart functions
\cite[Theorem~4.6]{ehrhartw}. 
Using Procedure~\ref{procedure0}, we readily obtain: 	
\begin{align*}
E_\mathcal{P}(n)&=(n+1)^2,\\
E_\mathcal{P}^{w_1}(n)&=(n^3+2n^2+n),\\
E_\mathcal{P}^{w_2}(n)&=(5/2)n^3+5n^2+(5/2)n,\\
E_\mathcal{P}^{w_3}(n)&=(1/2)n^3+n^2+(1/2)n,\\
E_\mathcal{P}^{w_1w_2}(n)&=(35/12)n^4+(20/3)n^3+(55/12)n^2+(5/6)n,\\
E_\mathcal{P}^{w_1w_3}(n)&=(17/12)n^4+(19/6)n^3+(25/12)n^2+(1/3)n,\\
E_\mathcal{P}^{w_2w_3}(n)&=(7/12)n^4+(4/3)n^3+(11/12)n^2+(1/6)n, \mbox{ and }\\
E_\mathcal{P}^{w_1w_2w_3}(n)&=(11/6)n^5+(29/6)n^4+(25/6)n^3+(7/6)n^2 \mbox{ for all }n\geq 0
\end{align*}
\end{example}

\begin{example}\label{example0-bis}\rm  
Let $\mathcal{P}={\rm conv}(v_1,v_2)$, $v_1=0$, $v_2=1$, be the unit interval in $\mathbb{R}$. 
Consider the linear functions $w_i=t_1$, $i=1,2,3$ and the polynomial
$w=(t_1+1)^3$. 
Then, $E_\mathcal{P}(n)=n+1$. 
Using Procedure~\ref{procedure0}, one obtains the following equalities
\vspace{-2mm}
$$
1^3+2^3+\cdots+(n+1)^3=E_\mathcal{P}^{w}(n)= ((n+1)^2(n+2)^2)/4\mbox{ for all }n\geq 0.
\vspace{-5mm}
$$

\quad Note that $w$ is not a homogeneous polynomial, the
leading coefficient of $E_\mathcal{P}^{w}$ is $1/4$, and
$\int_{\mathcal{P}}w=15/4$ is not equal to $1/4$; cf.
Theorem~\ref{nonneg-poly}(d) and the comments after this.    
\end{example}

\begin{example}\label{example0-bis-bis}\rm  
Let $\mathcal{P}={\rm conv}(v_1,v_2,v_3)$, $v_1=(1,0)$, $v_2=(0,2)$, $v_3=(2,3)$, and let $w$ be the linear function $w=(2/5)t_1-(6/25)t_2$. 
Using Procedure~\ref{procedure0}, we obtain (cf.~Theorem~\ref{nonneg-poly}(d)):
\begin{align*}
E_\mathcal{P}(n)&=\frac{5}{2}n^2+\frac{3}{2}n+1,&F_\mathcal{P}(x)&=\frac{1+2x+2x^2}{(1-x)^3};\\
E_\mathcal{P}^{t_1}(n)&=\frac{5}{2}n^3+\frac{3}{2}n^2+n,&F_\mathcal{P}^{t_1}(x)&=\frac{x(2x^2+8x+5)}{(1-x)^4};\\
E_\mathcal{P}^{t_2}(n)&=\frac{25}{6}n^3+\frac{5}{2}n^2+\frac{4}{3}n,&F_\mathcal{P}^{t_2}(x)&=\frac{x(x+4)(3x+2)}{(1-x)^4};\\
E_\mathcal{P}^{w}(n)&=\frac{2}{25}n,&F_\mathcal{P}^{w}(x)&=\frac{(2/25)x}{(1-x)^2}.
\end{align*}
\end{example}

\begin{example}\label{example3}\rm  
Let $\mathcal{P}=C_2=[0,1]^2$ be the unit square, and let $g_k(t_1,t_2)=t_1^kt_2^k$. 
Then, using Procedure~\ref{procedure0}, the weighted Ehrhart polynomials and series for $k=0,\ldots,4$ are given by
\begin{align*}
E_\mathcal{P}^{g_k}(n)=&n^2+2n+1,\quad k=0,\\
F_\mathcal{P}^{g_k}(x)=&\frac{1+x}{(1-x)^3},\quad k=0,\\
E_\mathcal{P}^{g_k}(n)=&(1/4)n^4+(1/2)n^3+(1/4)n^2,\quad k=1,\\
F_\mathcal{P}^{g_k}(x)=&\frac{x^3+4x^2+x}{(1-x)^5},\quad k=1,\\
E_\mathcal{P}^{g_k}(n)=&(1/9)n^6+(1/3)n^5+(13/36)n^4+(1/6)n^3+(1/36)n^2,\quad k=2,\\
F_\mathcal{P}^{g_k}(x)=&\frac{x^5+18x^4+42x^3+18x^2+x}{(1-x)^7},\quad k=2,\\
E_\mathcal{P}^{g_k}(n)=&(1/16)n^8+(1/4)n^7+(3/8)n^6+(1/4)n^5+(1/16)n^4,\quad k=3,\\
F_\mathcal{P}^{g_k}(x)=&\frac{x^7+72x^6+603x^5+1168x^4+603x^3+72x^2+x}{(1-x)^9},\quad k=3,\\
E_\mathcal{P}^{g_k}(n)=&(1/25)n^{10}+(1/5)n^9+(23/60)n^8+(1/3)n^7+(22/225)n^6\\
&-(1/30)n^5-(1/45)n^4+(1/900)n^2, \, k=4,\\ 
F_\mathcal{P}^{g_k}(x)=&\frac{x^9+278x^8+6480x^7+35402x^6+60830x^5+35402x^4+6480x^3+278x^2+x}{(1-x)^{11}},\, k=4.
\end{align*}
\end{example}

\begin{example}\label{example4}\rm 
Let $\mathcal{P}$ be the polytope ${\rm conv}((1,0),(0,1),(1,1))$ and let $t_i\colon\mathbb{R}^2\rightarrow\mathbb{R}$ be the linear function defined by the polynomial $t_i$ in $S=\mathbb{R}[t_1,t_2]$, $i=1,2$. 
Note that $\deg(E_\mathcal{P}^{t_i})=3$ for $i=1,2$.
Then, using Procedure~\ref{procedure0}, we obtain
\begin{align*}
E_\mathcal{P}(n)&=(1/2)n^2+(3/2)n+1,\, n\geq 0,\\
E_\mathcal{P}^{t_i}(n)&=(1/3)n^3+n^2+(2/3)n,\, n\geq 0,\, i=1,2,\\
E_\mathcal{P}^{t_1-t_2}(n)&=E_\mathcal{P}^{t_1}(n)-E_\mathcal{P}^{t_2}(n)=0,\, n\geq 0.
\end{align*}
\quad The values of $E_\mathcal{P}^{t_i}(n)$ at $n=1,2,3,4$ are $2,8,20,40$, respectively. 
The weighted Ehrhart series of $\mathcal{P}$ relative to $t_i$ is $F_\mathcal{P}^{t_i}(x)=2x/(1-x)^4$, $i=1,2$.
\end{example}

\begin{example}\label{example4-bis}\rm  
We give an illustration of the equality $E_{\mathcal{P}_w}=E_\mathcal{P}^{w+1}=E_\mathcal{P}^w+E_\mathcal{P}$ of
Proposition~\ref{linear-weighted-ehrhart}. 
Let $\mathcal{P}={\rm conv}(v_1,v_2,v_3,v_4)$, $v_1=(1,1,0)$, $v_2=(0,1,1)$, $v_3=(1,0,1)$, $v_4=(1,1,7)$, and let $w$ be the linear function $w=t_1+t_2+t_3$. 
Then, $w(v_1)=2$, $w(v_2)=2$, $w(v_3)=2$, $w(v_4)=9$, and using Procedure~\ref{procedure11} we obtain
\begin{align*}
E_{\mathcal{P}_w}(n)&=(n+1)(n+2)(105n^2+115n+12)(1/24),\\
E_\mathcal{P}(n)&=(1/6)(n+1)(n+2)(7n+3),\\
E_\mathcal{P}^w(n)&=E_{\mathcal{P}_w}(n)-E_\mathcal{P}(n)=(1/8)n(n+1)(n+2)(35n+29),\\
F_\mathcal{P}^w(x)&=F_{\mathcal{P}_w}(x)-F_\mathcal{P}(x)=\frac{1+53x+51x^2}{(1-x)^5}-\frac{1+6x}{(1-x)^4}=\frac{3(16+19x)x }{(1-x)^5},
\end{align*}
where ${\mathcal{P}_w}={\rm conv}(\{(v_i,0),(v_i,w(v_i))\}_{i=1}^4)$. 
\end{example}

\begin{example}\label{example11}\rm Let $\mathcal{P}$ be the polytope ${\rm conv}((2,0),(0,2))$ and let $w\colon\mathbb{R}^2\rightarrow\mathbb{R}$ be the linear function $(t_1,t_2)\mapsto t_1+t_2$. 
Using Proposition~\ref{linear-weighted-ehrhart} and Procedure~\ref{procedure11}, we get 
\begin{align*}
E_\mathcal{P}^w(n)&=E_{\mathcal{P}_w}(n)-E_\mathcal{P}(n)=(1+4n+4n^2)-(1+2n)=2n(1+2n),\\
F_\mathcal{P}^w(x)&=F_{\mathcal{P}_w}(x)-F_\mathcal{P}(x)=\frac{1+6x+x^2}{(1-x)^3}-\frac{1+x}{(1-x)^2}=\frac{6x+2x^2}{(1-x)^3}.
\end{align*}
\quad Let $\omega$ be the affine function $(x_1,x_2)\mapsto x_1+x_2-1$. 
Then, using Corollary~\ref{affine-weighted-ehrhart} and adapting Procedure~\ref{procedure11}, we get 
\begin{align*}
E_\mathcal{P}^\omega(n)&=E_{\mathcal{Q}_1}(n)-2E_\mathcal{P}(n)=(1+4n+4n^2)-2(1+2n)=4n^2-1, \text{ and}\\
F_\mathcal{P}^\omega(x)&=F_{\mathcal{Q}_1}(x)-2F_\mathcal{P}(x)=\frac{1+6x+x^2}{(1-x)^3}-\frac{2(1+x)}{(1-x)^2}=\frac{-1+6x+3x^2}{(1-x)^3}.
\end{align*}
\end{example}

\begin{example}\label{example22}\rm Let $\mathcal{P}$ be the polytope ${\rm conv}(1,2)=[1,2]$ and let $w\colon\mathbb{R}\rightarrow\mathbb{R}$ be the function $t_1\mapsto t_1^2$. 
Then, using the formula $\sum_{i=1}^ni^2=[n(n+1)(2n+1)]/6$, we get 
\begin{align*}
E_\mathcal{P}^w(n)&=\sum_{i=n}^{2n}i^2=\left(\sum_{i=1}^{2n}i^2\right)-\Biggl(\sum_{i=1}^{n-1}i^2\Biggl)=\left[\frac{n(14n^2+15n+1)}{6}\right].
\end{align*}
\quad Setting $\mathcal{Q}={\rm conv}((1,0),(2,0),(1,1),(2,4))$ and using Procedure~\ref{procedure11}, we obtain 
\begin{align*}
E_\mathcal{Q}(n)-E_\mathcal{P}(n)& =\left(\frac{5}{2}n^2+\frac{7}{2}n+1\right)-(n+1)=\frac{5}{2}(n)(n-1).
\end{align*}
\quad Thus, ${\rm vol}(\mathcal{Q})=5/2$, the area under the curve $w(t_1)=t_1^2$ is $\int_{1}^{2}w(t_1)\mathrm{d}t_1=7/3$, $E_\mathcal{P}^w(n)$ is a polynomial in $n$ of degree $3$ whereas $E_\mathcal{Q}(n)-E_\mathcal{P}(n)$ is a polynomial in $n$ of degree $2$. 
This example shows that the linearity assumption in Proposition~\ref{linear-weighted-ehrhart} is essential. 
\end{example}

\begin{example}\label{example6}\rm Let $\mathcal{P}$ be the lattice polytope ${\rm conv}(v_1,\ldots,v_7)$ defined by the points
\begin{align*}
&v_1=(1, 1, 0, 0, 0, 0, 0), v_2=(0, 1, 1, 0, 0, 0, 0), v_3=(0, 0, 1, 1, 0, 0,0), v_4=(1, 0, 0, 1, 0, 0, 0),\\
&v_5= (0, 0, 0, 0, 1, 1, 0), v_6=(0, 0, 0, 0, 0, 1, 1), v_7=(0, 0, 0, 0, 1, 0, 1),
\end{align*}
and let $w=t_1\cdots t_s$. 
Then, using Procedure~\ref{procedure0}, we obtain
\begin{align*}
E_\mathcal{P}(n)&=(1/120)(n+1)(n+2)(n+3)(n+4)(2n+5),\\
E_\mathcal{P}^w(n)&=(1/59875200)(n)(n-3)(n-2)(n-1)(n+1)(n+2)(n+3)(n+4)\\
&\ \ \ \ (35n^4+40n^3-143n^2+122n-810)\mbox{ for all }n\geq 0,
\end{align*}
$E_\mathcal{P}^w(n)=0$ for $n=0,1,2,3$, and $E_\mathcal{P}^w(4)=6$.
\end{example}


\begin{example}\label{jun4-25-1}\rm 
Let $\mathcal{P}={\rm conv}((1,0),\,(0,1),\,(1,1))$ and let
$f=2x+3y$. To compute $\int_\mathcal{P}f$ and the weighted Ehrhart
function of $\mathcal{P}$, using \textit{Normaliz}
\cite{normaliz2}, one needs to find the inequalities that define
$\mathcal{P}$ first. Below we set up a procedure for \textit{Normaliz} 
to compute $\int_\mathcal{P}f$. On the other hand using Procedure~\ref{procedure0}
requires the points that define $\mathcal{P}$ instead. Note that one has:
\begin{enumerate}
\item[\rm(a)] $\mathcal{P}=\{(x,y)\mid 0\leq x\leq 1,\, 1-x\leq y\leq 1\}$, and  
\item[\rm(b)] $\int_{0}^1\int_{1-x}^1(ax+by)\,dy\, dx=(a+b)/3$.
\end{enumerate}
\quad Using the inequalities for $\mathcal{P}$ of part
(a) and the following input file for \textit{Normaliz}
\cite{normaliz2} gives the equality $\int_\mathcal{P}f=5/3$. 
By a similar procedure, we get $\int_\mathcal{P}(x^2+y^2)=1/2$.
\begin{verbatim}
amb_space 3
inequalities 4
1 0 0
1 1 1
-1 0 -1
0 -1 -1
polynomial
2*x[1]+3*x[2];
Integral
\end{verbatim}
\quad If $g=x^2+y^2$, using the following input file for \textit{Normaliz} \cite{normaliz2}, we obtain that the weighted Ehrhart function of $\mathcal{P}$ is $E_\mathcal{P}^g(n)=(1/3)n+(3/2)n^2+(5/3)n^3+(1/2)n^4$ for $n\geq 0$ and the weighted Ehrhart series of $\mathcal{P}$ is $F_\mathcal{P}^g(x)=(4x+8x^2)/(1-x)^5$.   
\vspace{-2mm}
\begin{verbatim} 
amb_space 3
inequalities 4
1 0 0
1 1 1
-1 0 -1
0 -1 -1
polynomial
x[1]^2+x[2]^2;
WeightedEhrhartSeries
\end{verbatim}
\end{example}

\begin{example}\label{example7} Let $w_1$ be the linear function $t_1+2t_2$ and let $\mathcal{P}$ be the polytope ${\rm conv}(v_1,v_2,v_3)$, where $v_1=(1,1)$, $v_2=(3,0)$, $v_3=(2,3)$.
Letting $v_4=(2,2)$, $v_5=(2,1)$, one has
\begin{align*}
w_1(\mathcal{P}\cap\mathbb{Z}^2)&=w_1(\{v_1,v_2,v_3,v_4,v_5\})=\{3,3,8,6,4\}=\{3,4,6,8\},\\
w_1(\mathcal{P})\cap\mathbb{Z}&={\rm conv}(\{3,3,8\})\cap\mathbb{Z}=[3,8]\cap\mathbb{Z}=\{3,4,5,6,7,8\},
\end{align*}
$|nw_1(\mathcal{P})\cap\mathbb{Z}|=|[3n,8n]\cap\mathbb{Z}|=5n+1$ for $n\geq 0$, and $w_1(\mathcal{P}\cap\mathbb{Z}^2)\subsetneq w_1(\mathcal{P})\cap\mathbb{Z}$. 
The Ehrhart ring of $\mathcal{P}$ is given by
$$
A(\mathcal{P})=K[t_1t_2z,\ t_1^3z,\ t_1^2t_2^3z,\ t_1^2t_2^2z,\ t_1^2t_2z],
$$
$E_\mathcal{P}(n)=1+(3/2)n+(5/2)n^2$, and $F_\mathcal{P}(x)=(1+2x+2x^2)/(1-x)^3$. 
Then, by Theorem~\ref{jdl2}, we obtain
$$
A^w(\mathcal{P})=K[\{y_1^{w_1(a)}z^n\mid a\in
n\mathcal{P}\cap\mathbb{Z}^2\}]
=K[y_1^3z,\ y_1^3z,\ y_1^8z,\ y_1^6z,\ y_1^4z],
$$
where monomials in $A^w(\mathcal{P})$ of the form $y_1^mz^n$ have degree $n$. 
The Hilbert function $H(n)$ of $A^w(\mathcal{P})$ is the number of distinct monomials in $w_1(n\mathcal{P}\cap\mathbb{Z}^2)$. 
The toric ideal $P$ of $A^w(\mathcal{P})$ is the kernel of the map 
$$K[u_1,\ldots,u_5]\mapsto A^w(\mathcal{P}),\  u_i\mapsto
y_1^{n_i}z,
$$
where $n_1=3$, $n_2=3$, $n_3=8$, $n_4=6$, $n_5=4$. Then
$$
P=(u_1-u_2,\  u_4^2-u_3u_5,\ u_2^2u_4-u_5^3,\ u_2^2u_3-u_4u_5^2),
$$
$H(n)=5n-1$ for $n\geq 1$, and 
$$F_{A^w(\mathcal{P})}(x)=\frac{1+2x+2x^2}{(1-x)^2}.$$ 
\quad The ring $A^w(\mathcal{P})$ is Cohen--Macaulay because the
projective dimension of $K[u_1,\ldots,u_5]/P$ is equal to $3$ and is equal to
the height of the toric ideal $P$. 
The ring $A^w(\mathcal{P})$ is not normal because its integral closure
is given by $\overline{A^w(\mathcal{P})}=K[\{y_1^iz\mid i=3,\ldots,8\}]$.
\end{example}

\begin{appendix}

\section{Procedures for weighted Ehrhart functions}\label{procedures-wehrhart}

In this section, we give procedures for \textit{Macaulay}$2$
\cite{mac2}---using its interface to \textit{Normaliz}
\cite{normaliz2}---to determine the weighted Ehrhart polynomial
$E_\mathcal{P}^w$ and the weighted Ehrhart series $F_\mathcal{P}^w$
of a lattice polytope $\mathcal{P}$.     

\begin{procedure}\label{procedure0}\rm 
Let $\mathcal{P}={\rm conv}(v_1,\ldots,v_m)$ be a lattice polytope in
$\mathbb{R}^s$ and let
$\omega\colon\mathbb{R}^s\rightarrow\mathbb{R}$ be a polynomial.   
The following procedure for \textit{Macaulay}$2$ \cite{mac2} computes
$E_\mathcal{P}^w$ and its generating function
$F_\mathcal{P}^w(x)=\sum_{n=0}^\infty E_\mathcal{P}^w(n)x^n$ using
Theorem~\ref{nonneg-poly}(a), Lemma~\ref{oct12-22}, and Lagrange
interpolation \cite{interpolation}.     
One can also use finite differences methods to compute
$E_\mathcal{P}^w$ \cite{Sauer,sauer,Sauer-Xu}, and \textit{Normaliz}
\cite{normaliz2} to compute $E_\mathcal{P}^w$ and $F_\mathcal{P}^w$
\cite{Bruns-Soger}.    
The following procedure corresponds to Example~\ref{example3}.  
We load the \textit{Macaulay}$2$ packages \textit{Polyhedra}
\cite{Polyhedra} and \textit{Normaliz} \cite{normaliz2} for
computations with polyhedra.  
\begin{verbatim}
restart
loadPackage("Normaliz",Reload=>true)
loadPackage("Polyhedra", Reload => true)
R=QQ[t1]
--Euler numbers
H1=(d,k) -> sum for i from 0 to k list (-1)^i*binomial(d+1,i)*(k-i)^d;
--This is the Ehrhart series of the d-th cube [0,1]^d, for d>=1
H=(d) -> (sum for k from 1 to d list H1(d,k)*t1^(k-1))/(1-t1)^(d+1)
Fc=(d)-> if d==0 then 1/(1-t1) else t1*H(d) 
--This function gives the Ehrhart series of a polynomial f
--to be used later after we compute the 
--weighted Ehrhart polynomial using 
--Lagrange interpolation 
HS=(f)->(entries(matrix{reverse apply(terms f,x->leadCoefficient x)}*
(vector toList apply(flatten apply(reverse terms f, 
x->degree(x)),Fc))))#0
--This is the cube P=[0,1]^2
X=toList (set{0,1})^**(2)/deepSplice
A=matrix apply(X,x->toList(x))
--For technical reasons we transpose A
P = convexHull transpose A 
--Ehrhart polynomial of P
E=ehrhart P
Ee=toExternalString E
g=value Ee
--dimension of the polytope P
d=(degree E)#0
k=1
--This is the polynomial w that we use as a weight
w=(t1,t2)->t1^k*t2^k
--degree of w
p=2*k
--d+p is the degree of E_P^w
--the variable t3 below is needed because 
--the integral points of the dilation nP come with a 1 
--at the end. 
--number of variables=ambient space of P plus 1. 
w1=(t1,t2,t3)->t1^k*t2^k+t3-1
--integral points of the dilation nP
nP=(n)->delete({0},(apply(entries (normaliz(n*A,"polytope"))
#"gen",x-> if last x == 1 then x else {0})))
--weighted Ehrhart polynomial in degree n
Ew=(n)->sum apply(apply(nP(n),x->toSequence(x)),w1)
--interpolation points
puntos=toList apply(1..d+p+1,Ew)
--This is the Macaulay2 program of D. Cook II 
--and C. Jansen for Lagrange polynomial interpolation
polynomialInterpolation = method ()
polynomialInterpolation (List,PolynomialRing) := 
RingElement => (L,R) -> (if #gens R != 1 then error 
"must be a single variable polynomial ring";
n := #L;
t := first gens R;
p := for j from 0 to n-1 list (
L_j * product(0..(j-1) | (j+1) .. (n-1), 
k -> (t - (k+1)) / ((j+1) - (k+1)))
);
sum(p)
)
polynomialInterpolation (List) := RingElement => L -> (
t := symbol t;
polynomialInterpolation(L, QQ[t])
)					    
--This is weighted Ehrhart polynomial of P obtained 
--by Lagrange interpolation.
f=polynomialInterpolation(puntos, R)
factor f
--This is the weighted Ehrhart series of P
HS(f)
factor(HS(f))
--this gives the invariants of P 
--and the usual Ehrhart series of P
ehrhartseries=(normaliz(A,"polytope"))#"inv"
ehrhartseries#"hilbert series num"
ehrhartseries#"hilbert series denom"
\end{verbatim}
\end{procedure}

\begin{procedure}\label{procedure11}\rm 
Let $\mathcal{P}={\rm conv}(v_1,\ldots,v_m)$ be a lattice polytope in
$\mathbb{R}^s$ and let
$\omega\colon\mathbb{R}^s\rightarrow\mathbb{R}$ be a linear function
such that $w(v_i)\in\mathbb{N}_+$ for $i=1,\ldots,m$ and
$w(e_i)\in\mathbb{N}$ for $i=1,\ldots,s$.     
The following procedure for \textit{Normaliz} \cite{normaliz2}
computes the weighted Ehrhart function $E_\mathcal{P}^w$ and its generating function
$F_\mathcal{P}^w(z)=\sum_{n=0}^\infty E_\mathcal{P}^w(n)z^n$ using
the formula   
$$
E_\mathcal{P}^w(n):=E_\mathcal{P}^{w}(n)=\sum_{a\in n\mathcal{P}\cap\mathbb{Z}^s}w(a)=
E_\mathcal{Q}(n)-E_\mathcal{P}(n)\mbox{ for }n\geq 0,
$$
where $\mathcal{Q}={\rm
conv}((v_1,0),\ldots,(v_m,0),(v_1,w(v_1)),\ldots(v_m,w(v_m)))$ (see
Proposition~\ref{linear-weighted-ehrhart}).   
The input to compute $E_\mathcal{Q}$ (resp. $E_\mathcal{P}$) is the
matrix whose rows are the vectors defining $\mathcal{Q}$ (resp.
$\mathcal{P}$).    
This procedure corresponds to Example~\ref{example11}. 
\begin{verbatim}
amb_space 4
polytope 4
2 0 0
0 2 0
2 0 2
0 2 2
/*
This computes E_Q and F_Q. 
To compute E_P and F_P use the following:
amb_space 3
polytope 2
2 0 
0 2 
*/
\end{verbatim}
\end{procedure}
\end{appendix}

\section*{Acknowledgments.} 
\textit{Macaulay}$2$ \cite{mac2} and \textit{Normaliz}
\cite{normaliz2} were used to implement algorithms for computing
weighted Ehrhart functions and series of lattice polytopes.   

\section*{Statements and Declarations}  
On behalf of all authors, the corresponding author states that there is no conflict of interest.

No funding was received for conducting this study.

The authors have no relevant financial or non-financial interests to disclose.

Data sharing is not applicable to this article as no datasets were generated or analyzed during the current study. 


\bibliographystyle{plain}

\end{document}